\documentclass[a4paper,12pt,reqno]{amsart}

  \headheight0.6in
  \headsep22pt
  \textheight23cm
  \topmargin-1.7cm
  \oddsidemargin 0.5cm
  \evensidemargin0.5cm
  \textwidth15.3cm

\usepackage{amscd,amssymb,amsmath,amsfonts,amsthm, mathrsfs,xspace}
\usepackage{graphicx}
\usepackage{tensor}
\usepackage{verbatim,enumerate,paralist}
\usepackage{textcomp}
\usepackage[pagebackref,colorlinks]{hyperref}
\hypersetup{%
  urlcolor=blue,linkcolor=blue,citecolor=blue
}
\usepackage{pdfpages}


\DeclareMathAlphabet{\mathbbe}{U}{bbold}{m}{n}



\renewcommand{\epsilon}{\varepsilon}
\renewcommand{\phi}{\varphi}

\usepackage[T1]{fontenc}
\usepackage[all,2cell,poly,knot,tips]{xy}
\UseAllTwocells

\setlength{\marginparwidth}{2cm}

\newcounter{Definitioncount}
\setcounter{Definitioncount}{0}

\newtheorem{theorem}{Theorem}[section] 
\newtheorem{lemma}[theorem]{Lemma}
\newtheorem{proposition}[theorem]{Proposition}
\newtheorem{corollary}[theorem]{Corollary}


\theoremstyle{definition}
\newtheorem{remark}[theorem]{Remark}
\newtheorem{example}[theorem]{Example}

\newtheoremstyle{fact}{\bigskipamount}{\medskipamount}{\upshape}{}{\itshape}{. }{ }{Fact}
\theoremstyle{fact}

\newtheoremstyle{genquest}{\bigskipamount}{\medskipamount}{\upshape}{}{\itshape}{. }{ }{General Question}
\theoremstyle{genquest}

\newtheoremstyle{step}{2\bigskipamount}{\medskipamount}{\upshape}{}{\itshape}{. }{ }{\underline{Step~\thestep}}
\theoremstyle{step}

\renewcommand{\thestep}{\arabic{step}}

\newcommand{\dd}{\colon}
\newcommand{\lra}{\longrightarrow}

\newcommand{\ra}{\rightarrow}

\newcommand{\Ra}{\Rightarrow}

\newcommand{\ldual}[1]{\mathord{{\let\nolimits\relax\sideset{^\wedge}{}{#1}}}}
\newcommand{\laction}[2]{\mathord{{\let\nolimits\relax\sideset{^{#1}}{}{#2}}}}
\newcommand{\conj}[2]{\mathord{{\let\nolimits\relax\sideset{^{#1}}{}{#2}}}}

\def\CA{{\mathscr A}}
\def\CB{{\mathscr B}}
\def\CC{{\mathscr C}}

\def\CX{{\mathscr X}}

\renewcommand{\phi}{\varphi}
\renewcommand{\epsilon}{\varepsilon}

 \newcommand{\Set}{\ensuremath{\mathrm{Set}}\xspace}
 \newcommand{\Cat}{\ensuremath{\mathrm{Cat}}\xspace}

\newcommand{\ox}{\otimes}
\newcommand{\x}{\times}

\def\1c#1{\stackrel{#1}{\to}}

\begin{document}

\title{Skew-monoidal reflection and lifting theorems}

\author{Stephen Lack}
\address{Department of Mathematics, Macquarie University, NSW 2109, Australia}
\email{steve.lack@mq.edu.au}

\author{Ross Street}
\address{Department of Mathematics, Macquarie University, NSW 2109, Australia}
\email{ross.street@mq.edu.au}

\thanks{Both authors gratefully acknowledge the support of the Australian Research Council Discovery Grant DP130101969; Lack acknowledges with equal gratitude the support of an Australian Research Council Future Fellowship}

\dedicatory{In memory of Brian Day}
     
\date{\today}

\maketitle

\begin{abstract}
\noindent This paper extends the Day Reflection Theorem to skew monoidal categories.
We also provide conditions under which a skew monoidal structure can be lifted to
the category of Eilenberg-Moore algebras for a comonad.   

\end{abstract}

\tableofcontents

\section{Introduction}

In the first part of the paper, we squeeze some more results out of Brian Day's PhD thesis \cite{DayPhD}.
The question with which the thesis began was how to extend monoidal structures along 
dense functors, all at the level of enriched categories.
Brian separated the general problem into two special cases.
The first case concerned extending along a Yoneda embedding, 
which led to promonoidal categories
and Day convolution \cite{DayConv}. 
The second case involved extending along a reflection into a full subcategory:
the Day Reflection Theorem \cite{DayRT}. 

While the thesis was about monoidal categories, we can, without even modifying 
the biggest diagrams, adapt the results to skew monoidal categories.
Elsewhere \cite{115,116} we have discussed convolution.
Here we will provide the skew version of the Day Reflection Theorem \cite{DayRT}. 
The beauty of this variant is further evidence that the direction choices involved in the skew notion are important for organizing, and adding depth to, certain mathematical phenomena.  

In the second part of the present paper, the skew warpings of \cite{115} are 
slightly generalized to involve a skew action; they can in turn be seen as a special case of the skew warpings of \cite{121}.
Under certain natural conditions these warpings can be lifted to the category of 
Eilenberg-Moore coalgebras for a comonad. 
In particular, this applies to lift skew monoidal structures.
For idempotent comonads, we compare the result with our skew reflection theorem.  

\section{Skew monoidal reflection}

Recall from \cite{Szl2012,115,116} the notion of {\em (left) skew monoidal structure} on a category $\CX$. It involves a functor $\otimes : \CX \times \CX \lra \CX$, an object $I\in \CX$, and natural families of (not necessarily invertible) morphisms 
$$\alpha_{A,B,C} : (A\otimes B)\otimes C \ra A\otimes (B\otimes C) , 
\qquad \lambda_A : I\otimes A \ra A ,
\qquad \rho_A : I\ra A\otimes I ,$$
satisfying five coherence conditions. 
It was shown in \cite{JimA} that these five conditions are independent.

Recall, also from these references, that an {\em opmonoidal structure} 
on a functor $L\dd \CX \ra \CA$ consists of a natural family of morphisms
$$\psi_{X,Y} \dd L(X\otimes Y)\ra LX\bar{\otimes} LY$$
and a morphism $\psi_0 \dd LI \ra \bar{I}$ satisfying three axioms.
We say the opmonoidal functor is {\em normal} when $\psi_0$ is invertible.
We say the opmonoidal functor is {\em strong} when $\psi_0$ and all $\psi_{X,Y}$ are invertible. 
However, in this paper, a limited amount of such strength, 
in which only certain components of $\psi$ are invertible, will be important. 

Suppose $(\CX, \otimes, I, \alpha, \lambda,\rho)$ and 
$(\CA, \bar{\otimes}, \bar{I}, \bar{\alpha}, \bar{\lambda},\bar{\rho})$
are skew monoidal categories.

\begin{theorem}\label{skmonrefthm}
Suppose $L\dashv N \dd \CA\ra \CX$ is an adjunction with unit 
$\eta \dd 1_{\CX}\Ra NL$ and invertible counit $\varepsilon \dd LN \Ra 1_{\CA}$.
Suppose $\CX$ is skew monoidal.
There exists a skew monoidal structure on $\CA$ for which 
$L\dd \CX\ra \CA$ is normal opmonoidal with each 
$\psi_{X,NB}$ invertible if and only if, for all $X\in \CX$ and $B\in \CA$, the morphism 
\begin{eqnarray}\label{moninv}
L(\eta_X\otimes 1_{NB})\dd L(X\otimes NB)\ra L(NLX\otimes NB)
\end{eqnarray}
is invertible.
In that case, the skew monoidal structure on $\CA$ is unique up to isomorphism.  
\end{theorem}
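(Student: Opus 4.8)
The plan is to follow the template of Day's original argument, reinterpreted skew-monoidally, and to build everything out of the candidate tensor product $A\bar{\otimes}B:=L(NA\otimes NB)$ with unit $\bar{I}:=LI$. Before anything else I would record two consequences of the standing hypotheses: since $\varepsilon$ is invertible, the triangle identity $N\varepsilon_B\circ\eta_{NB}=1_{NB}$ forces $\eta_{NB}$ to be invertible, and likewise $\varepsilon_{LX}\circ L\eta_X=1_{LX}$ forces $L\eta_X$ to be invertible. These two facts are what let the (generally non-invertible) unit $\eta$ behave invertibly at the objects where the theorem needs it.

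For the \emph{necessity} direction I would argue formally. Given a skew monoidal structure on $\CA$ with $L$ normal opmonoidal and each $\psi_{X,NB}$ invertible, consider the naturality square for $\psi$ applied to the morphism $\eta_X\otimes 1_{NB}\dd X\otimes NB\to NLX\otimes NB$ of $\CX$:
\[
\psi_{NLX,NB}\circ L(\eta_X\otimes 1_{NB})=(L\eta_X\mathbin{\bar{\otimes}}1_{LNB})\circ\psi_{X,NB}.
\]
Here $\psi_{X,NB}$ and $\psi_{NLX,NB}$ are invertible by hypothesis (both have second argument in the image of $N$), and $L\eta_X\mathbin{\bar{\otimes}}1_{LNB}$ is invertible because $L\eta_X$ is invertible and $\bar{\otimes}$ is a functor. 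Hence $L(\eta_X\otimes 1_{NB})$ is a composite of isomorphisms, which is exactly the invertibility of \eqref{moninv}.

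For \emph{sufficiency} I would set $\psi_{X,Y}:=L(\eta_X\otimes\eta_Y)$ and $\psi_0:=1_{LI}$, so that $L$ is automatically normal; the factorization $\psi_{X,NB}=L(1_{NLX}\otimes\eta_{NB})\circ L(\eta_X\otimes 1_{NB})$ shows each $\psi_{X,NB}$ is invertible, using \eqref{moninv} for the second factor and invertibility of $\eta_{NB}$ for the first. The constraints are then defined by the composites
\[
\bar{\alpha}_{A,B,C}=L(1_{NA}\otimes\eta_{NB\otimes NC})\circ L\alpha_{NA,NB,NC}\circ L(\eta_{NA\otimes NB}\otimes 1_{NC})^{-1},
\]
\[
\bar{\lambda}_A=\varepsilon_A\circ L\lambda_{NA}\circ L(\eta_I\otimes 1_{NA})^{-1},
\qquad
\bar{\rho}_A=L(1_{NA}\otimes\eta_I)\circ L\rho_{NA}\circ\varepsilon_A^{-1},
\]
where the two inverses exist precisely by \eqref{moninv} (taken at $X=NA\otimes NB$ and at $X=I$). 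Naturality of each family is immediate from naturality of $\alpha,\lambda,\rho,\eta,\varepsilon$, and one should note that these are forced: they are the unique families rendering the opmonoidal associativity, left-unit and right-unit axioms for $L$ valid at objects in the image of $L$, hence (by $\varepsilon$ and naturality) everywhere.

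The main obstacle is the remaining bookkeeping: verifying the five skew coherence axioms for $(\CA,\bar{\otimes},\bar{I},\bar{\alpha},\bar{\lambda},\bar{\rho})$ and the three opmonoidal axioms for $L$. I would carry these out as diagram chases that mirror the ones in Day's thesis essentially unchanged — this is the sense in which ``the biggest diagrams are not modified.'' The role of the hypothesis \eqref{moninv} throughout is to supply the isomorphisms $L(\eta\otimes 1)^{-1}$ that straighten the iterated reflections $NL$ appearing inside nested tensor products, so that an equation in $\CX$ (a coherence axiom for $\otimes$, postcomposed with $L$ and conjugated by $\eta$'s and $\varepsilon$'s) descends to the required equation in $\CA$; the asymmetry of the skew axioms is matched by the asymmetry of the definitions above, in which the invertible $L(\eta\otimes 1)$ appears on the left-hand argument while the merely natural $\eta$ appears on the right. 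Finally, \emph{uniqueness up to isomorphism} follows because any qualifying structure satisfies $\psi_{NA,NB}\dd L(NA\otimes NB)\to LNA\mathbin{\bar{\otimes}}LNB$ invertible, whence $(\varepsilon_A\mathbin{\bar{\otimes}}\varepsilon_B)\circ\psi_{NA,NB}$ is a natural isomorphism $L(NA\otimes NB)\cong A\bar{\otimes}B$ and $\psi_0$ a natural isomorphism $LI\cong\bar{I}$; compatibility of these isomorphisms with the constraints (again via the opmonoidal axioms) then upgrades them to an isomorphism of skew monoidal structures.
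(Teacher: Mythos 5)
Your proposal is correct and follows essentially the same route as the paper: the necessity direction is the same naturality square for $\psi$ at $\eta_X\otimes 1_{NB}$ (using invertibility of $L\eta_X$ via the triangle identity), and the sufficiency direction uses the identical candidate tensor $L(NA\otimes NB)$, unit $LI$, the same three formulas for $\bar{\alpha},\bar{\lambda},\bar{\rho}$ (which are exactly the paper's defining commutative squares, solved for the barred constraint), and the same opmonoidal data $\psi_{X,Y}=L(\eta_X\otimes\eta_Y)$, $\psi_0=1$. The verification of the five coherence axioms is deferred to Day-style diagram chases in both treatments, and your uniqueness argument is a mild elaboration of the paper's ``forced up to isomorphism'' remark.
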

\begin{proof}
Suppose $\CA$ has a skew monoidal structure $(\bar{\otimes}, \bar{I},\bar{\alpha}, \bar{\lambda},\bar{\rho})$ for which $L$ is normal opmonoidal with the $\psi_{X,NB}$ invertible.
We have the commutative square
$$
\xymatrix{
LX\bar{\otimes} LNB \ar[rr]^-{L\eta_X\bar{\otimes} 1} \ar[d]_-{\psi^{-1}} && LNLX\bar{\otimes} LNB \ar[d]^-{\psi^{-1}} \\
L(X\otimes NB) \ar[rr]^-{L(\eta_X\otimes 1)} && L(NLX\otimes NB)}
$$
in which the vertical arrows are invertible.
The top arrow is invertible with inverse $\varepsilon_{LX}\bar{\otimes}1$.
So the bottom arrow is invertible.

Conversely, suppose each $L(\eta_X\otimes 1_{NB})$ is invertible.
Wishing $L$ to become opmonoidal with the limited strength, we are forced 
(up to isomorphism) to put
$$A\bar{\otimes} B = L(NA\otimes NB) \ \text{ and } \ \bar{I} = LI \ ,$$
and to define the constraints $\bar{\alpha}, \bar{\lambda},\bar{\rho}$ by commutativity in the following diagrams. 
$$
\xymatrix{
L((NA\otimes NB)\otimes NC) \ar[rr]^-{L(\eta \otimes 1)} \ar[d]_-{L\alpha} && L(NL(NA\otimes NB)\otimes NC) \ar[d]^-{\bar{\alpha}} \\
L(NA\otimes (NA\otimes NC)) \ar[rr]_-{L(1\otimes \eta)} && L(NA\otimes NL(NB\otimes NC))}
$$
$$
\xymatrix{
L(I\otimes NA) \ar[rr]^-{L(\eta_I\otimes 1)} \ar[d]_-{L\lambda} && L(NLI\otimes NA) \ar[d]^-{\bar{\lambda}} \\
LNA \ar[rr]_-{\varepsilon_A} && A} 
\qquad
\xymatrix{
LNA \ar[rr]^-{\varepsilon_A} \ar[d]_-{L\rho} && A \ar[d]^-{\bar{\rho}} \\
L(NA\otimes I) \ar[rr]_-{L(1\otimes \eta_I)} && L(NA\otimes NLI)}
$$
The definitions make sense because the top arrows of the squares are invertible
(while the bottom arrows may not be). Now we need to verify the five axioms.
The proofs all proceed by preceding the desired diagram of barred morphisms
by suitable invertible morphisms involving only $\varepsilon_A$, $L\eta_X$, $\eta_{NA}$, or $L(\eta_X\otimes 1_{NB})$, then manipulating until one can make use of
the corresponding unbarred diagram. 

The biggest diagram for this is the proof of the pentagon for $\bar{\alpha}$.
Fortunately, the proof in Brian Day's thesis \cite{DayPhD} of the corresponding result for closed monoidal categories has the necessary Diagram 4.1.3 on page 94 written
without any inverse isomorphisms, so saves us rewriting it here. (The notation is a little different with $\psi$ in place of $N$ and with some of the simplifications we also use below.) 

It remains to verify the other four axioms. The simplest of these is 
\begin{eqnarray*}
\bar{\lambda}_{LI} \bar{\rho}_{LI} & = & \bar{\lambda}_{LI} \bar{\rho}_{LI}\varepsilon_{LI}L\eta_I \\
& = & \bar{\lambda}_{LI} L(1\otimes \eta_I) L\rho_{NLI}L\eta_I \\
& = & \bar{\lambda}_{LI} L(1\otimes \eta_I) L(\eta_I\otimes I) L\rho_I \\
& = & \bar{\lambda}_{LI}  L(\eta_I\otimes I) L(1\otimes \eta_I) L\rho_I \\
& = & \varepsilon_{LI} L\lambda_{NLI} L(1\otimes \eta_I) L\rho_I  \\
& = & \varepsilon_{LI} L\eta_I L\lambda_I L\rho_I  \\
& = &1_{LI}  L(\lambda_I \rho_I) \\
& = & 1_{LI} \ .
\end{eqnarray*}

For the other three, to simplify the notation (but to perhaps complicate the reading), 
we write as if $N$ were an inclusion of a full subcategory, choose $L$ 
so that the counit is an identity, and write $XY$ for $X\otimes Y$. 
Then we have
\begin{eqnarray*}
\bar{\lambda}_{B\bar{\otimes}C} \bar{\alpha}_{LI,B,C} L(\eta_{(LI)\bar{\ox}B}1_C)L((\eta_I1_B)1_C) & = & \bar{\lambda}_{B\bar{\otimes}C} L(1\eta_{BC})L\alpha_{LI,B,C} L((\eta_I1_B)1_C) \\
& = & \bar{\lambda}_{B\bar{\otimes}C} L(1_{LI}\eta_{BC})L(\eta_I1_{BC}) L\alpha_{I,B,C} \\
& = & \bar{\lambda}_{B\bar{\otimes}C} L(\eta_{I}1_{BC}) L(1_I\eta_{BC}) L\alpha_{I,B,C} \\
& = & L\lambda_{BC} L\alpha_{I,B,C} \\
& = & L(\lambda_{B} 1_C) \\
& = & (\bar{\lambda}_B\bar{\otimes}1_C)L(\eta_{(LI)B}1_C)L((\eta_I1_B)1_C) 
\end{eqnarray*}
yielding the axiom $\bar{\lambda}_{B\bar{\otimes}C} \bar{\alpha}_{LI,B,C} = \bar{\lambda}_B\bar{\otimes}1_C$ on right cancellation.

For the proof of the axiom 
$(1_A\bar{\otimes} \bar{\lambda}_C)\bar{\alpha}_{A,LI,C} (\bar{\rho}_A\bar{\otimes} 1_C) = 1_{A\bar{\otimes} C}$, 
we can look at Diagram 4.1.2 on page 93 of \cite{DayPhD}.
The required commutativities are all there once we reverse the direction of
the right unit constraint which Day calls $r$ instead of $\rho$. 

For the final axiom, we have
\begin{eqnarray*}
\bar{\alpha}_{A,B,LI} \bar{\rho}_{A\bar{\otimes}B} 
& = & \bar{\alpha}_{A,B,LI} L(\eta_{AB}1_{LI})L(1_{AB}\eta_I)L\rho_{AB} \\
& = & L(1_A\eta_{BLI}) L\alpha_{A,B,LI} L(1_{AB}\eta_I)L\rho_{AB} \\
& = & L(1_A\eta_{BLI}) L(1_A(1_B\eta_I)) L\alpha_{A,B,I} L\rho_{AB} \\
& = & L(1_A\eta_{BLI}) L(1_A(1_B\eta_I)) L(1_A\rho_B) \\
& = & 1_A\bar{\otimes} \bar{\rho}_B \ .  
\end{eqnarray*}

The desired opmonoidal structure on $L$ is defined by $\psi_0 = 1 \dd LI\ra \bar{I}$
and $\psi_{X,Y} = L(\eta_X\otimes \eta_Y) \dd L(X\otimes Y) \ra L(NLX\otimes NLY)$.
The three axioms for opmonoidality are easily checked and we have each $\psi_{X,NB} = L(1_{NLX}\otimes \eta_{NB})L(\eta_X\otimes 1_{NB})$ invertible. 
\end{proof} 

\section{A reflective lemma}\label{aLoR}

In this section we state a standard result in a form required for later reference.
For the sake of completeness, we include a proof.
 
Assume we have an adjunction $L\dashv N \dd \CA\ra \CX$ with unit $\eta \dd 1_{\CX}\Ra NL$
and counit $\varepsilon \dd LN \Ra 1_{\CA}$. 
Assume $N$ is fully faithful; that is, equivalently, the counit $\varepsilon$ is invertible.

\begin{lemma}\label{reflem}
For $Z\in \CX$, the following conditions are equivalent:
\begin{itemize}
\item[(i)] there exists $A\in \CA$ and $Z\cong NA$;
\item[(ii)] for all $X\in \CX$, the function $\CX(\eta_X,1)\dd \CX(NLX,Z)\ra \CX(X,Z)$ is surjective; 
\item[(iii)] the morphism $\eta_Z \dd Z\ra NLZ$ is a coretraction (split monomorphism);
\item[(iv)] the morphism $\eta_Z \dd Z\ra NLZ$ is invertible;
\item[(v)] for all $X\in \CX$, the function $\CX(\eta_X,1)\dd \CX(NLX,Z)\ra \CX(X,Z)$ is invertible.
\end{itemize}
\end{lemma}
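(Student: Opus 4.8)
The plan is to establish the five conditions as mutually equivalent by running a single cycle of implications, say $(i)\Rightarrow(v)\Rightarrow(ii)\Rightarrow(iii)\Rightarrow(iv)\Rightarrow(i)$. Before starting I would record two consequences of the standing hypothesis that $\varepsilon$ is invertible. First, the triangle identity $\varepsilon_{LX}\cdot L\eta_X=1_{LX}$ forces $L\eta_X=\varepsilon_{LX}^{-1}$, so every $L\eta_X$ is invertible. Second, the companion triangle identity $N\varepsilon_A\cdot\eta_{NA}=1_{NA}$ holds in any adjunction and will be used as stated. These are the only structural inputs I expect to need.

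For $(i)\Rightarrow(v)$ I would invoke the universal property of the reflection directly: when $Z=NA$, the function $\CX(\eta_X,1)\dd\CX(NLX,NA)\to\CX(X,NA)$ is the composite of the hom-set bijection $\CX(NLX,NA)\cong\CA(LX,A)$ coming from full faithfulness of $N$ with the adjunction bijection $\CA(LX,A)\cong\CX(X,NA)$; tracing a morphism $h$ through both shows the composite is exactly $h\mapsto h\cdot\eta_X$, hence a bijection. For a general $Z\cong NA$ I would transport along the isomorphism, using that post-composition with an isomorphism is a bijection commuting with $\CX(\eta_X,1)$. The steps $(v)\Rightarrow(ii)$ (a bijection is surjective) and $(iv)\Rightarrow(i)$ (take $A=LZ$) are immediate, and $(ii)\Rightarrow(iii)$ follows by setting $X=Z$ and pulling back $1_Z$ along the surjection to obtain a retraction $r$ with $r\cdot\eta_Z=1_Z$.

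The main obstacle is $(iii)\Rightarrow(iv)$: upgrading a splitting $r\cdot\eta_Z=1_Z$ to full invertibility of $\eta_Z$. Here I would apply $L$ to the splitting to get $Lr\cdot L\eta_Z=1_{LZ}$, and since $L\eta_Z$ is invertible this identifies $Lr=(L\eta_Z)^{-1}=\varepsilon_{LZ}$. Naturality of $\eta$ at the morphism $r\dd NLZ\to Z$ then gives $\eta_Z\cdot r=NLr\cdot\eta_{NLZ}=N\varepsilon_{LZ}\cdot\eta_{NLZ}$, which collapses to $1_{NLZ}$ by the triangle identity at $A=LZ$. Combined with $r\cdot\eta_Z=1_Z$, this shows $\eta_Z$ is invertible. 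This single naturality-plus-triangle computation is where the invertibility of $\varepsilon$ is genuinely essential; the remainder of the cycle is formal.
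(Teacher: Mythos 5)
Your proof is correct, and the overall architecture (a single cycle of implications built on the adjunction bijection and full faithfulness of $N$) matches the paper's, which runs $(i)\Rightarrow(ii)\Rightarrow(iii)\Rightarrow(iv)\Rightarrow(v)\Rightarrow(i)$. The genuine divergence is in the step $(iii)\Rightarrow(iv)$. The paper argues purely from the universal property of the unit: given $\nu$ with $\nu\eta_Z=1_Z$, one has $(\eta_Z\nu)\eta_Z=1\cdot\eta_Z$, and since precomposition with $\eta_Z$ is injective on morphisms into $NLZ$ (because every such morphism is $Ng$ for a unique $g$, by full faithfulness), it follows that $\eta_Z\nu=1$. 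You instead apply $L$ to the splitting, use that $L\eta_Z=\varepsilon_{LZ}^{-1}$ to identify $Lr=\varepsilon_{LZ}$, and then combine naturality of $\eta$ with the triangle identity $N\varepsilon_{LZ}\cdot\eta_{NLZ}=1$ to get $\eta_Z\cdot r=1$. Both are valid; the paper's version is shorter and makes clear that only the idempotent-monad formalism (injectivity of $\CX(\eta_Z,1)$ on the relevant hom-set) is needed, whereas yours makes the role of the invertible counit more explicit and computational. A second, minor difference: you prove $(i)\Rightarrow(v)$ directly by exhibiting $\CX(\eta_X,1)$ as a composite of two bijections, which lets you skip the paper's separate diagram for $(iv)\Rightarrow(v)$; the paper's route through $(i)\Rightarrow(ii)$ only extracts surjectivity at that stage and recovers bijectivity later. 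Your version is arguably tidier on this point, at the cost of slightly more bookkeeping in transporting along the isomorphism $Z\cong NA$.
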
 
\begin{proof} $(i) \Rightarrow (ii)$ 
\begin{eqnarray*}
\xymatrix{
\CX(X,Z) \ar[r]^-{\cong} \ar[d]_-{1} & \CX(X,NA) \ar[r]^-{\cong} & \CA(LX,A) \ar[d]^-{N} \\
\CX(X,Z)  & \CX(NLX,Z) \ar[l]_-{\CX(\eta_X,1)} & \CX(NLX,NA)\ar[l]_-{\cong} }
\end{eqnarray*}

$(ii) \Rightarrow (iii)$ Take $X=Z$ and obtain $\nu \dd NLZ \ra Z$ with $\CX(\eta_Z,1)\nu=1_Z$.  

$(iii) \Rightarrow (iv)$ If $\nu \eta_Z=1$ then $(\eta_Z \nu) \eta_Z=1 \eta_Z$, so, by the universal property of $\eta_Z$, we have $\eta_Z \nu = 1$.

$(iv) \Rightarrow (v)$ The non-horizontal arrows in the commutative diagram
$$\xymatrix{
\CX(NLX,Z) \ar[d]_{\CX(1,\eta_Z)} \ar[rr]^{\CX(\eta_X,1)}  && \CX(X,Z) \ar[d]^{\CX(1,\eta_Z)} 
\\
\CX(NLX,NLZ)\ar[rr]^{\CX(\eta_X,1)} && \CX(X,NLZ) 
\\
& \CA(LX,LZ) \ar[lu]^-{N} \ar[ru]_-{\cong} 
}$$ 
are all invertible, so the horizontal arrows are invertible too.

$(v) \Rightarrow (i)$ Clearly $(v) \Rightarrow (ii)$ and we already have $(ii) \Rightarrow (iii)\Rightarrow (iv)$, so take $A=LZ$ and the invertible $\eta_Z$.
\end{proof} 

\section{Skew closed reflection}

The Reflection Theorem \cite{DayRT} also deals with closed structure.
 
If, for objects $Y$ and $Z$ the functor $\CX(-\ox Y,Z)$ is representable, 
say via a natural isomorphism 
$$\CX(X\ox Y,Z)\cong \CX(X,[Y,Z]),$$
we call the representing object $[Y,Z]$ a {\em left internal hom}. 
Recall from Section 8 of \cite{116} that if this exists for all $Z$, 
so that $-\ox Y$ has a right adjoint, then $\CX$ becomes left skew closed.

\begin{theorem}\label{skclosedrefthm}
Suppose $L\dashv N \dd \CA\ra \CX$ is an adjunction with unit 
$\eta \dd 1_{\CX}\Ra NL$ and invertible counit $\varepsilon \dd LN \Ra 1_{\CA}$.
Suppose $\CX$ is skew monoidal and left internal homs of the form $[NB,NC]$ exist for all $B,C\in \CA$.
The morphisms \eqref{moninv} are invertible for all $X \in \CX$ and $B\in \CA$ if and only if the morphisms 
\begin{eqnarray}\label{lclosedinv}
\eta_{[NB,NC]} \dd [NB,NC] \ra NL[NB,NC]
\end{eqnarray}
are invertible for all $B, C\in \CA$.
In that case, the skew monoidal structure abiding on $\CA$, as seen from Theorem~\ref{skmonrefthm}, is left closed. Also, the functor $N$ is strong left closed. 
\end{theorem}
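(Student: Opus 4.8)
The plan is to reduce both invertibility conditions \eqref{moninv} and \eqref{lclosedinv} to a single statement about the maps $\CX(\eta_X,1)$ of Lemma~\ref{reflem}, and then to read the closed structure off the same identifications. Fix $B,C\in\CA$. Applying Lemma~\ref{reflem} with $Z=[NB,NC]$, the equivalence $(iv)\Leftrightarrow(v)$ says that $\eta_{[NB,NC]}$ is invertible if and only if, for every $X\in\CX$, the function
$$\CX(\eta_X,1)\dd \CX(NLX,[NB,NC])\ra \CX(X,[NB,NC])$$
is invertible. I would transport this function first across the representing isomorphism $\CX(-\ox NB,NC)\cong\CX(-,[NB,NC])$ of the internal hom, and then across $L\dashv N$, so as to recognise it as the image of the morphism \eqref{moninv} under a representable functor.

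Carrying this out, naturality of the internal-hom isomorphism in its first variable, evaluated at $\eta_X\dd X\ra NLX$, identifies the function above with
$$\CX(\eta_X\ox 1_{NB},1)\dd \CX(NLX\ox NB,NC)\ra \CX(X\ox NB,NC),$$
and naturality of the adjunction isomorphism $\CX(-,NC)\cong\CA(L-,C)$, evaluated at $\eta_X\ox 1_{NB}$, identifies this in turn with
$$\CA(L(\eta_X\ox 1_{NB}),1_C)\dd \CA(L(NLX\ox NB),C)\ra \CA(L(X\ox NB),C).$$
Thus, for fixed $B$, the conditions \eqref{lclosedinv} (as $C$ ranges over $\CA$) amount to the invertibility of $\CA(L(\eta_X\ox 1_{NB}),1_C)$ for all $X$ and all $C\in\CA$; by the Yoneda lemma this is exactly the invertibility of the morphisms \eqref{moninv} for that $B$ and all $X$. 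Letting $B$ range over $\CA$ gives the asserted equivalence.

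Now suppose the equivalent conditions hold. The plan for the closed structure is to set $[B,C]_{\CA}:=L[NB,NC]$ and to exhibit the required representation by splicing the same natural isomorphisms, using the definition $A\bar{\ox} B=L(NA\ox NB)$ from Theorem~\ref{skmonrefthm}:
$$\CA(A\bar{\ox} B,C)=\CA(L(NA\ox NB),C)\cong\CX(NA\ox NB,NC)\cong\CX(NA,[NB,NC]),$$
and then, since $\eta_{[NB,NC]}$ is invertible by \eqref{lclosedinv} and $N$ is fully faithful,
$$\CX(NA,[NB,NC])\cong\CX(NA,NL[NB,NC])\cong\CA(A,L[NB,NC]).$$
Each isomorphism is natural in $A$, so this makes $\CA$ left closed with the displayed internal hom. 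Finally, to see that $N$ is strong left closed, one notes that $N[B,C]_{\CA}=NL[NB,NC]$, and that the comparison morphism of the canonical closed-functor structure on $N$ is, under this identification, the inverse of the invertible unit $\eta_{[NB,NC]}\dd[NB,NC]\ra NL[NB,NC]$; its invertibility is again \eqref{lclosedinv}.

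I expect the main obstacle to be this last point: verifying that the isomorphism $\eta_{[NB,NC]}$ produced by the formal adjunction-juggling genuinely coincides (up to the established identifications) with the canonical closed-functor comparison for $N$, rather than being merely some abstract isomorphism between $N[B,C]_{\CA}$ and $[NB,NC]$. This is a mate computation that requires pinning down the lax monoidal structure on $N$, namely $n_{A',B}=\eta_{NA'\ox NB}\dd NA'\ox NB\ra N(A'\bar{\ox} B)$, and checking that the comparison, obtained as the transpose of $N(\mathrm{ev})\circ n$, inverts $\eta_{[NB,NC]}$. The remainder of the argument is a formal manipulation of the three natural isomorphisms together with Lemma~\ref{reflem} and Yoneda.
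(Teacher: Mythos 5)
Your proposal is correct and follows essentially the same route as the paper: the same transport of $\CX(\eta_X,1)$ across the internal-hom representation and the adjunction to identify it with $\CA(L(\eta_X\otimes 1_{NB}),1_C)$, the same appeal to Lemma~\ref{reflem} and Yoneda, and the same chain of isomorphisms yielding $[B,C]=L[NB,NC]$ and $N[B,C]\cong[NB,NC]$. The only difference is that you spell out the mate computation for the strong-closed structure on $N$ a little more explicitly than the paper does.
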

\begin{proof}
Consider the following commutative diagram.
$$
\xymatrix{
\CA(L(NLX\otimes NB),C) \ar[rr]^-{\CA(L(\eta \otimes 1),1)} \ar[d]_-{\cong} && \CA(L(X\otimes NB),C) \ar[d]^-{\cong} \\
\CX(NLX\otimes NB,NC)  \ar[d]_-{\cong} && \CX(X\otimes NB,NC) \ar[d]^-{\cong} \\
\CX(NLX,[NB,NC]) \ar[rr]_-{\CX(\eta_X,1)} && \CX(X,[NB,NC])}
$$
Invertibility of the arrows \eqref{moninv} is equivalent to the invertibility of the top horizontal
arrows. 
This is equivalent to invertibility of the bottom horizontal arrows.
By Lemma~\ref{reflem}, this is equivalent to invertibility of the arrows \eqref{lclosedinv}.

For the penultimate sentence of the Theorem, we now have the natural isomorphisms:
 \begin{eqnarray*}
\CA(A\bar{\otimes} B,C) & \cong & \CX(NA\otimes NB,NC) \\
& \cong & \CX(NA,[NB,NC]) \\
& \cong & \CX(NA,NL[NB,NC]) \\
& \cong & \CA(A,L[NB,NC])  
\end{eqnarray*}
yielding the left internal hom $[B,C]=L[NB,NC]$ for $\CA$.
For the last sentence, we have $N[B,C]=NL[NB,NC]\cong [NB,NC]$. 
\end{proof}

Our notation for a right adjoint to $X\otimes -$ is
$$\CX(X\otimes Y,Z) \cong \CX(Y,\langle X,Z\rangle) \ .$$
The {\em right internal hom} $\langle X,Z\rangle$ may exist for only certain objects $Z$.
In general, the existence of right homs in a left skew monoidal category does not
give a left or right skew closed structure.
When they do exist, we can reinterpret a stronger form of the invertibility condition \eqref{moninv}
of Theorem~\ref{skmonrefthm}.
\begin{theorem}\label{rskclosedrefthm}
Suppose $L\dashv N \dd \CA\ra \CX$ is an adjunction with unit 
$\eta \dd 1_{\CX}\Ra NL$ and invertible counit $\varepsilon \dd LN \Ra 1_{\CA}$.
Suppose $\CX$ is skew monoidal, and left internal homs of the form $[Y,NC]$ 
and right internal homs of the form $\langle X,NC\rangle$ exist.
The invertibility of one of the following three natural transformations implies invertibility of the other two: 
\begin{eqnarray}\label{moninvY}
L(\eta_X\otimes 1_{Y})\dd L(X\otimes Y)\ra L(NLX\otimes Y) \ ;
\end{eqnarray}
\begin{eqnarray}\label{lclosedinvY}
\eta_{[Y,NC]} \dd [Y,NC] \ra NL[Y,NC] \ ;
\end{eqnarray}
\begin{eqnarray}\label{rclosedinvY}
{\langle \eta_X,NC\rangle} \dd \langle NLX,NC\rangle \ra \langle X,NC\rangle \ .
\end{eqnarray}
\end{theorem}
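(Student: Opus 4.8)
The plan is to use \eqref{moninvY} as a hub and prove separately that it is equivalent to \eqref{lclosedinvY} and to \eqref{rclosedinvY}; since any one of the three then forces \eqref{moninvY}, which in turn forces the remaining one, this yields the stated implication. The equivalence \eqref{moninvY}~$\Leftrightarrow$~\eqref{lclosedinvY} is the argument of Theorem~\ref{skclosedrefthm} with a general object $Y$ in place of $NB$. First I would, for each fixed $Y$, write the commutative diagram
$$
\xymatrix{
\CA(L(NLX\otimes Y),C) \ar[rr]^-{\CA(L(\eta_X\otimes 1),1)} \ar[d]_-{\cong} && \CA(L(X\otimes Y),C) \ar[d]^-{\cong} \\
\CX(NLX\otimes Y,NC) \ar[d]_-{\cong} && \CX(X\otimes Y,NC) \ar[d]^-{\cong} \\
\CX(NLX,[Y,NC]) \ar[rr]_-{\CX(\eta_X,1)} && \CX(X,[Y,NC])}
$$
whose vertical arrows are invertible: the top pair from the adjunction $L\dashv N$, the bottom pair from the representing isomorphism of the left hom $[Y,NC]$. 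By Yoneda in the variable $C$, the morphism \eqref{moninvY} is invertible for all $X$ if and only if every top arrow is, hence if and only if every bottom arrow $\CX(\eta_X,1)$ is; and by the equivalence (v)$\Leftrightarrow$(iv) of Lemma~\ref{reflem} with $Z=[Y,NC]$, this holds for all $X$ precisely when $\eta_{[Y,NC]}$ is invertible. Quantifying over $Y$ gives \eqref{moninvY}~$\Leftrightarrow$~\eqref{lclosedinvY}.

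For \eqref{moninvY}~$\Leftrightarrow$~\eqref{rclosedinvY} I would run the same scheme but resolve the middle hom-set by the right hom instead. For each fixed $X$ the diagram
$$
\xymatrix{
\CA(L(NLX\otimes Y),C) \ar[rr]^-{\CA(L(\eta_X\otimes 1),1)} \ar[d]_-{\cong} && \CA(L(X\otimes Y),C) \ar[d]^-{\cong} \\
\CX(NLX\otimes Y,NC) \ar[d]_-{\cong} && \CX(X\otimes Y,NC) \ar[d]^-{\cong} \\
\CX(Y,\langle NLX,NC\rangle) \ar[rr]_-{\CX(1,\langle \eta_X,NC\rangle)} && \CX(Y,\langle X,NC\rangle)}
$$
again has invertible verticals, the lower pair now coming from the representing isomorphism $\CX(W\otimes Y,NC)\cong\CX(Y,\langle W,NC\rangle)$; this uses the assumed existence of $\langle NLX,NC\rangle$ as well as of $\langle X,NC\rangle$. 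The one genuinely new point is to identify the bottom arrow: because $\langle -,NC\rangle$ is contravariant, precomposition with $\eta_X\otimes 1_Y$ transports across the right-hom isomorphism into postcomposition with $\langle\eta_X,NC\rangle$, so the bottom arrow is $\CX(1,\langle\eta_X,NC\rangle)$. Yoneda in $C$ then shows \eqref{moninvY} is invertible for all $Y$ if and only if $\CX(1,\langle\eta_X,NC\rangle)$ is invertible for all $Y$ and $C$; a second application of Yoneda, now in $Y$, reduces this to invertibility of $\langle\eta_X,NC\rangle$, that is, of \eqref{rclosedinvY}. Quantifying over $X$ completes this equivalence.

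Since the vertical comparisons in both diagrams are built solely from the adjunction $L\dashv N$ and the defining isomorphisms of the two internal homs, no skew coherence data enters and there is no hard computation to perform. The step I would check most carefully is the variable bookkeeping: that both squares genuinely commute (naturality of the comparison isomorphisms in the direction used), that the induced map in the first diagram is $\CX(\eta_X,1)$ while that in the second is $\CX(1,\langle\eta_X,NC\rangle)$, and that the two flavours of Yoneda are applied in the correct variables. In particular the right-hom case needs \emph{two} applications of Yoneda, in $C$ and then in $Y$, and this doubled reduction is the only place where the argument departs from the template of Theorem~\ref{skclosedrefthm}.
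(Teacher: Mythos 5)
Your proposal is correct and follows essentially the same route as the paper: the paper stacks your two diagrams into the single four-row diagram \eqref{rightproof}, observes that since all verticals are isomorphisms the three horizontal families are simultaneously invertible, and then identifies each family with one of \eqref{moninvY}, \eqref{lclosedinvY} (via Lemma~\ref{reflem}) and \eqref{rclosedinvY} (via Yoneda), exactly as you do. The only difference is presentational, namely your use of \eqref{moninvY} as a hub between two three-row diagrams rather than one combined diagram.
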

\begin{proof}
Consider the commutative diagram \eqref{rightproof}.
Invertibility of any one of the horizontal families in the diagram implies
that of the other two.
Invertibility of the arrows \eqref{moninvY} is equivalent to the invertibility of the top horizontal family. 
By Lemma~\ref{reflem}, invertibility of the middle horizontal family is equivalent to invertibility of the arrows \eqref{lclosedinv}.
By the Yoneda Lemma, invertibility of the bottom horizontal family is equivalent to invertibility of the arrows \eqref{rclosedinvY}.
\end{proof}

\begin{eqnarray}\label{rightproof}
\begin{aligned}
\xymatrix{
\CA(L(NLX\otimes Y),C) \ar[rr]^-{\CA(L(\eta \otimes 1),1)} \ar[d]_-{\cong} && \CA(L(X\otimes Y),C) \ar[d]^-{\cong} \\
\CX(NLX\otimes Y,NC)  \ar[d]_-{\cong} && \CX(X\otimes Y,NC) \ar[d]^-{\cong} \\
\CX(NLX,[Y,NC]) \ar[rr]^-{\CX(\eta_X,1)} \ar[d]_-{\cong} && \CX(X,[Y,NC]) \ar[d]_-{\cong} \\
\CX(Y,\langle NLX,NC\rangle) \ar[rr]^-{\CX(1,\langle \eta_X,1\rangle)} && \CX(Y,\langle X,NC\rangle)}
\end{aligned}
\end{eqnarray}

\section{An example}\label{ex}

This is an example of the opposite (dual) of Theorem~\ref{skmonrefthm}
which we enunciate explicitly as Proposition~\ref{skmoncorefprop} below.
Instead of a reflection we have a coreflection.
To keep using left skew monoidal categories we also reverse the tensor product.
For a monoidal functor $R \dd \CX \to \CA$, we denote the structural morphisms
by $$\phi_0\dd I \to RI \ \text{ and } \ \phi_{X,Y}\dd RX\otimes RY\to R(X\ox Y) \ .$$ 

\begin{proposition}\label{skmoncorefprop}
Suppose $R\vdash N \dd \CA\ra \CX$ is an adjunction with counit 
$\varepsilon \dd NR \Ra 1_{\CX}$ and invertible unit $\eta \dd 1_{\CA}\Ra RN$.
Suppose $\CX$ is left skew monoidal.
There exists a left skew monoidal structure on $\CA$ for which 
$R\dd \CX\ra \CA$ is normal monoidal each $\phi_{NA,Y}$ invertible
if and only if, for all $A\in \CA$ and $Y\in \CX$, the morphism 
\begin{eqnarray}\label{moninv}
R(NA\otimes \varepsilon_Y)\dd R(NA\otimes NRY)\ra R(NA\otimes Y)
\end{eqnarray}
is invertible. 
\end{proposition}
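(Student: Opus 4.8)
The plan is to recognise Proposition~\ref{skmoncorefprop} as exactly the image of Theorem~\ref{skmonrefthm} under the duality that reverses all arrows and simultaneously reverses the tensor product, as already signalled by the opening sentence of this section. So rather than repeating the five-axiom verification, I would set up this duality precisely, check that it carries each hypothesis and each conclusion of the Theorem to the corresponding statement of the Proposition, and then simply invoke the Theorem.

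First I would make the base duality explicit. Given a left skew monoidal $\CX$, equip $\CX\op$ with the reversed tensor $X\otimes' Y := Y\otimes X$ and the same unit $I$. A direct check shows this is again \emph{left} (not right) skew monoidal, with associator $\alpha'_{A,B,C}$ induced by $\alpha_{C,B,A}$ and with the two unit constraints exchanged, $\lambda'=\rho$ and $\rho'=\lambda$ (each read in the opposite category). Under the same duality a normal opmonoidal functor between left skew monoidal categories becomes a normal monoidal functor, and the partial invertibility of its opmonoidal structure $\psi$ on a family of index pairs translates into partial invertibility of the monoidal structure $\phi$ on the transposed family: concretely $\psi_{P,Q}$ corresponds to $\phi_{Q,P}$.

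Next I would transport the adjunction. The coreflection $R\vdash N$, i.e.\ $N\dashv R$ with invertible unit $\eta\dd 1_{\CA}\Ra RN$, passes under $(-)\op$ to an adjunction $R\op\dashv N\op\dd \CA\op\ra\CX\op$ in which the formerly invertible unit $\eta$ becomes an invertible \emph{counit} and the counit $\varepsilon\dd NR\Ra 1_{\CX}$ becomes the \emph{unit}. This is precisely the input shape of Theorem~\ref{skmonrefthm}, with $\CX\op$ (reversed tensor) as its skew monoidal base, $R\op$ playing the role of the opmonoidal left adjoint $L$, and $N\op$ playing the role of $N$. The one computation worth doing carefully is the translation of the invertibility condition: the Theorem asks that $L(\eta_X\otimes 1_{NB})$ be invertible, and unwinding the definition of $\otimes'$ together with $\eta\mapsto\varepsilon$ turns this morphism into $R(1_{NA}\otimes \varepsilon_Y)=R(NA\otimes\varepsilon_Y)$, which is exactly \eqref{moninv}. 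The conclusion of the Theorem, namely existence up to isomorphism of a skew monoidal structure on $\CA\op$ making $R\op$ normal opmonoidal with the relevant $\psi$'s invertible, then reverses to give a left skew monoidal structure on $\CA$ making $R$ normal monoidal with each $\phi_{NA,Y}$ invertible, and conversely.

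I expect the only real obstacle to be bookkeeping rather than mathematics. The delicate points are all matters of orientation: confirming that op-plus-reverse sends left skew monoidal structures to left skew (so that the Theorem applies verbatim and one is not secretly in the right skew world), matching the unit/counit transposition correctly, keeping track of which side of $\psi$/$\phi$ carries the invertibility, and checking that the counit $\varepsilon$ really lands in the second tensor factor of \eqref{moninv}. None of these introduce new content, but a single slipped index would silently break the correspondence, so the verification of the condition translation above is the step I would write out in full.
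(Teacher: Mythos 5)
Your proposal is correct and is exactly the paper's (implicit) argument: the paper offers no separate proof of Proposition~\ref{skmoncorefprop}, simply presenting it as the dual of Theorem~\ref{skmonrefthm} under op-plus-tensor-reversal, which is the duality you spell out. Your bookkeeping of the orientation issues (left skew preserved, unit/counit swapped, $\psi_{X,NB}\leftrightarrow\phi_{NA,Y}$, and $L(\eta_X\otimes 1)\leftrightarrow R(NA\otimes\varepsilon_Y)$) checks out.
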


Consider an injective function $\mu \dd U\ra O$.
For an object $A$ of the slice category $\mathrm{Set}/U$, we write
$A_u$ for the fibre over $u\in U$. We have an adjunction 
$$R\vdash N \dd \mathrm{Set}/U \ra \mathrm{Set}/O$$ defined by
$(NA)_i = \sum_{\mu(u)=i}{A_u}$ and $(RX)_u = X_{\mu(u)}$ with invertible unit.
The $i$th component of the counit $\varepsilon_X\dd NRX \ra X$ is the function 
$\sum_{\mu(u)=i}{X_{\mu(u)} \ra X_i}$ which is the identity of $X_i$ when $i$ 
is in the image of $\mu$. 

Let $\CC$ be a category with $\mathrm{ob}\CC = O$. 
Then $\mathrm{Set}/O$ becomes left skew monoidal on defining the tensor $X\otimes Y$ by
$$(X\otimes Y)_j = \sum_i{X_i\times \CC(i,j)\times Y_j}$$
and the (skew) unit $I$ by $I_j=1$.
The associativity constraint $\alpha \dd (X\otimes Y)\otimes Z \ra X\otimes (Y\otimes Z)$
is defined by the component functions
$$\sum_{i,j}{X_i\times \CC(i,j)\times Y_j\times \CC(j,k)\times Z_k}\ra \sum_{i,j}{X_i\times \CC(i,k)\times Y_j\times \CC(j,k)\times Z_k}$$ 
induced by the functions
$$ \CC(i,j)\times \CC(j,k) \ra  \CC(i,k)\times \CC(j,k)$$
taking $(a \dd i\ra j,b\dd j\ra k)$ to $(b\circ a \dd i\ra k, b\dd j\ra k)$. 
Define $\lambda_Y\dd I\otimes Y\ra Y$ to have $j$-component $\sum_i{\CC(i,j)\times Y_j} \ra Y_j$
whose restriction to the $i$th injection is the second projection onto $Y_j$.
Define $\rho_X \dd X\ra X\otimes I$ to have $j$-component $X_j \ra \sum_i{X_i\times \CC(i,j)}$
equal to the composite of $X_j\ra X_j\times \CC(j,j), \ x \mapsto (x,1_j),$ with the $j$th injection.

This provides an example of Proposition~\ref{skmoncorefprop}.
In fact, it satisfies the stronger condition of the dual to Theorem~\ref{rskclosedrefthm}. 
To see that $$R(X \otimes \varepsilon_Y)\dd R(X\otimes NRY)\ra R(X\otimes Y)$$
is invertible, since $N$ is fully faithful, we need to prove
$$G(X \otimes \varepsilon_Y)\dd G(X\otimes GY)\ra G(X\otimes Y)$$
is invertible where $G = NR$ is the idempotent comonad generated by the reflection.
Notice that $(GX) = U_j\times X_j$ where $U_j$ is the fibre of $\mu$ over $j\in O$.
Since $\mu$ is injective, $U_j\cong U_j\otimes U_j$, so   
\begin{eqnarray*}
G(X\otimes GY)_j & = &U_j\times (X\otimes GY)_{j} \\
& = & U_j\times \sum_i{X_i\times \CC(i,j)\times (GY)_j} \\
& = & \sum_i{U_j\times X_i\times \CC(i,j)\times U_j\times Y_j} \\
& \cong & \sum_i{U_j\times X_i\times \CC(i,j) \times Y_j} \\
& = & U_j\times (X\otimes Y)_{j} \\
& = & G(X\otimes Y)_{j} \ .  
\end{eqnarray*}
The resultant left skew structure on $\mathrm{Set}/U$ has tensor product
\begin{eqnarray*}
(A\bar{\otimes} B)_v & = & R(NA\otimes NB)_v \\
& = & (NA\otimes NB)_{\mu(v)} \\
& = & \sum_i{(NA)_i\times \CC(i,\mu(v))\times (NB)_{\mu(v)}} \\
& \cong & \sum_u{A_{u}\times \CC(\mu(u),\mu(v))\times B_v} \ .  
\end{eqnarray*}
Of course we can see that this is merely the left skew structure on $\mathrm{Set}/U$
arising from the category whose objects are the elements $u\in U$ and whose
morphisms $u\ra v$ are morphisms $\mu(u)\ra \mu(v)$ in $\CC$; 
that is, the category arising as the full image of the functor $\mu \dd U\ra \CC$.

As an easy exercise the reader might like to calculate the monoidal structure
$$RX\bar{\otimes} RY \ra R(X\otimes Y)$$ on $R$ and check that these components
are not invertible in general while, of course, they are for $X=NA$.

\section{Skew warpings riding a skew action}\label{sw}

We slightly generalize the notion of skew warping defined in \cite{115} to involve an action.
This is actually a special case of skew warping on a two-object skew bicategory in
the sense of \cite{121}.  

Let $\CC$ denote a left skew monoidal category. 
A {\em left skew action} of $\CC$ on a category $\CA$ is an opmonoidal
functor 
\begin{eqnarray}\label{lsa}
\CC \lra [\CA,\CA] \ , \ X \mapsto X\star -
\end{eqnarray}
where the skew monoidal (in fact strict monoidal) tensor product on the endofunctor category $[\CA,\CA]$
is composition.
The opmonoidal structure on \eqref{lsa} consists of natural families
\begin{eqnarray}\label{lsas}
\alpha_{X,Y,A} \dd (X\ox Y)\star A \lra X\star (Y\star A) \ \text{ and } \ \lambda_A \dd I\star A \lra A
\end{eqnarray}
subject to the three axioms \eqref{lsa1}, \eqref{lsa2}, \eqref{lsa3}.
\begin{eqnarray}\label{lsa1}
\begin{aligned}
\xymatrix{
((X\ox Y)\ox Z)\star A \ar[rr]^-{\alpha} \ar[d]_-{\alpha\star 1} &  & (X\ox Y)\star (Z\star A) \ar[d]^-{\alpha} \\
(X\ox (Y\ox Z))\star A \ar[r]_-{\alpha} & X\star ((Y\ox Z)\star A) \ar[r]_-{1\star\alpha} & X\star (Y\star (Z\star A)) }
\end{aligned}
\end{eqnarray}
\begin{eqnarray}\label{lsa2}
\begin{aligned}
\xymatrix{
(I\ox Y)\star A \ar[rd]_{\lambda\star 1}\ar[rr]^{\alpha}   && I\star (Y\star A) \ar[ld]^{\lambda} \\
& Y\star A  &
}
\end{aligned}
\end{eqnarray}
\begin{eqnarray}\label{lsa3}
\begin{aligned}
\xymatrix{
(X\ox I)\star A \ar[rr]^-{\alpha}  && X\star (I\star A) \ar[d]^-{1\star \lambda} \\
X\star A \ar[rr]_-{1} \ar[u]^-{\rho\star 1} && X\star A}
\end{aligned}
\end{eqnarray}
A category $\CA$ equipped with a skew action of $\CC$ is called a {\em skew $\CC$-actegory}.

A {\em skew left warping} riding the skew action of $\CC$ on $\CA$ consists of the following data:
\begin{itemize}
\item[(a)] a functor $T:\CA \lra \CC$;
\item[(b)] an object $K$ of $\CA$; 
\item[(c)] a natural family of morphisms $v_{A,B} : T(TA\star B)\lra TA\otimes TB$ in $\CC$;
\item[(d)] a morphism $v_0 : TK \lra I$; and,
\item[(e)] a natural family of morphisms $k_A : A \lra TA\star K$;
\end{itemize}
such that the following five diagrams commute.
\begin{equation}\label{warpassoc}
\begin{aligned}
\xymatrix{
T(TA\star B)\otimes TC \ar[rr]^-{v_{A,B}\otimes 1} && (TA\otimes TB)\otimes TC \ar[d]^-{\alpha_{TA,TB,TC}} \\
T(T(TA\star B)\star C) \ar[u]^-{v_{TA\star B , C}} \ar[d]_-{T(v_{A,B}\star 1)} && TA\otimes (TB\otimes TC) \\
T((TA\otimes TB)\star C) \ar[dr]_-{T\alpha_{TA,TB,C}\phantom{AA}} && TA\otimes T(TB\star C) \ar[u]_-{1\otimes v_{B,C}} \\
& T(TA\star (TB\star C)) \ar[ru]_-{\phantom{AA}v_{A,TB\star C}} &}
\end{aligned}
\end{equation}
\begin{equation}\label{warpunit1}
\begin{aligned}
\xymatrix{
& TK\otimes TB \ar[rd]^-{ v_0 \otimes 1_{TB}}  & \\
T(TK\star B) \ar[ru]^-{v_{K,B}} \ar[d]_-{T(v_0\star 1_B)} & & I\otimes TB \ar[d]^-{\lambda_{TB}} \\
T(I\otimes B) \ar[rr]_-{T\lambda_B} & & TB }
\end{aligned}
\end{equation}
\begin{equation}\label{warpunit2}
\begin{aligned}
\xymatrix{
T(TA\star K)  \ar[rr]^-{v_{A,K}} && TA\otimes TK \ar[d]^-{1\otimes v_{0}} \\
TA \ar[u]^-{Tk_{A}}  \ar[rr]_{\rho_{TA}} && TA\ox I 
}\end{aligned}
\end{equation}
\begin{equation}\label{warpunit3}
\begin{aligned}
\xymatrix{
T(TA\star B)\star K  \ar[rr]^-{v_{A,B}\star 1_K} && (TA\otimes TB)\star K \ar[d]^-{\alpha_{TA,TB,K}} \\
TA\star B \ar[u]^-{k_{TA\star B}}  \ar[rr]_{1_{TA}\star k_B} && TA\star (TB\star K) }
\end{aligned}
\end{equation}
\begin{equation}\label{warpunit4}
\begin{aligned}
\xymatrix{
TK\star K \ar[rr]^-{v_0\star1_K}  && I\star K \ar[d]^-{\lambda_K} \\
K \ar[rr]_-{1_K} \ar[u]^-{k_K} && K}
\end{aligned}
\end{equation}

\begin{example}
A skew warping on a skew monoidal category (in the sense of \cite{115}) is just the case where 
$\CA=\CC$ with tensor as action.
\end{example}

Just as in Proposition 3.6 of \cite{115}, we obtain a skew monoidal structure from a skew warping.

\begin{proposition}\label{newskew}
A skew left warping riding a left skew action of a left skew monoidal category $\CC$ on a category $\CA$ determines left skew monoidal structure on $\CA$ as follows:
\begin{itemize}
\item[(a)] tensor product functor $A\bar{\ox} B = TA\star B$;
\item[(b)] unit $K$;
\item[(c)] associativity constraint $$T(TA\star B)\star C \stackrel{v_{A,B}\star 1_C}\lra (TA\otimes TB) \star C \stackrel{\alpha_{TA,TB,C}}\lra TA\star (TB\star C) \ ;$$
\item[(d)] left unit constraint $$TK\star B\stackrel{v_0\star 1_B}\lra I\star B \stackrel{\lambda_B}\lra B \ ;$$
\item[(e)] right unit constraint $$A\stackrel{k_A} \lra TA\star K \ .$$
\end{itemize}
There is an opmonoidal functor $(T, v_0 , v_{A,B}) : (\CA , \bar{\ox} , K) \lra (\CC , \otimes , I)$. 
\end{proposition}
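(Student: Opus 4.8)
The plan is to check the five left-skew-monoidal axioms for $(\bar{\ox}, K, \bar{\alpha}, \bar{\lambda}, \bar{\rho})$ and then the three opmonoidal axioms for $(T, v_0, v)$. The first move in every case is to substitute the defining composites
$$\bar{\alpha}_{A,B,C}=\alpha_{TA,TB,C}\circ(v_{A,B}\star 1_C),\quad \bar{\lambda}_B=\lambda_B\circ(v_0\star 1_B),\quad \bar{\rho}_A=k_A,$$
together with the bookkeeping rule $f\bar{\ox}g=Tf\star g$ for the functoriality of $\bar{\ox}$, so that each barred expression becomes a word in $v$, $v_0$, $k$, $\alpha$, $\lambda$, $\rho$, $T$ and $\star$. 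The only gluing tools needed are naturality of the families $v_{A,B}$ and $\alpha_{X,Y,A}$ and functoriality of $\star$ in each variable.

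The opmonoidal claim of the last sentence I expect to be immediate: after the substitutions, the associativity, left-unit and right-unit axioms for $(T,v_0,v)$ are, respectively, nothing but diagrams \eqref{warpassoc}, \eqref{warpunit1} and \eqref{warpunit2} (using $T\bar{\alpha}=T\alpha\circ T(v\star 1)$, $T\bar{\lambda}=T\lambda\circ T(v_0\star 1)$ and $\bar{\rho}=k$), so no further argument is required there. Two of the skew axioms are equally direct restatements of the warping data: the triangle $\bar{\lambda}_K\,\bar{\rho}_K=1_K$ unwinds to $\lambda_K\circ(v_0\star 1_K)\circ k_K=1_K$, which is exactly \eqref{warpunit4}; and $\bar{\alpha}_{A,B,K}\,\bar{\rho}_{A\bar{\ox}B}=1_A\bar{\ox}\bar{\rho}_B$ unwinds to $\alpha_{TA,TB,K}\circ(v_{A,B}\star 1_K)\circ k_{TA\star B}=1_{TA}\star k_B$, which is exactly \eqref{warpunit3}.

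For the remaining two unit axioms the idea is to slide $v_0$ through $\alpha$ and then invoke the corresponding action axiom. For $\bar{\lambda}_{B\bar{\ox}C}\,\bar{\alpha}_{K,B,C}=\bar{\lambda}_B\bar{\ox}1_C$, naturality of $\alpha$ in its first variable carries $v_0\star 1$ past $\alpha_{TK,TB,C}$ to produce $\alpha_{I,TB,C}$; the action axiom \eqref{lsa2} collapses $\lambda\circ\alpha_{I,TB,C}$ to $\lambda_{TB}\star 1$, whereupon functoriality of $\star$ and the unit diagram \eqref{warpunit1} rewrite the result as $T\bar{\lambda}_B\star 1_C=\bar{\lambda}_B\bar{\ox}1_C$. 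Dually, for $(1_A\bar{\ox}\bar{\lambda}_C)\,\bar{\alpha}_{A,K,C}\,(\bar{\rho}_A\bar{\ox}1_C)=1_{A\bar{\ox}C}$, naturality of $\alpha$ in its middle variable carries $1\star(v_0\star 1)$ past $\alpha_{TA,TK,C}$ to produce $\alpha_{TA,I,C}$, diagram \eqref{warpunit2} rewrites $(1\otimes v_0)\circ v_{A,K}\circ Tk_A$ as $\rho_{TA}$, and the action axiom \eqref{lsa3} closes the identity.

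The main obstacle is the pentagon for $\bar{\alpha}$. Here I would expand both legs into composites of $\alpha$'s and morphisms of the form $v\star 1$ and then paste. After using naturality of $v$ and of $\alpha$ to line up all the $TA\otimes TB$-type bracketings, the commuting region decomposes into one instance of the action pentagon \eqref{lsa1}, governing the $\alpha$-cells, and one instance of the warping-associativity diagram \eqref{warpassoc}, which reconciles the clash between $v_{A\bar{\ox}B,C}=v_{TA\star B,C}$ occurring on the one leg and the composite built from $v_{A,B}$, $v_{A,TB\star C}$ and $v_{B,C}$ occurring on the other. This is precisely the configuration of Day's Diagram 4.1.3 and of Proposition 3.6 of \cite{115}; since both \eqref{lsa1} and \eqref{warpassoc} are stated without inverse isomorphisms, the same pasting transcribes directly, with $\star$ playing the role previously played by a second tensor. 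I expect the bookkeeping of this single large diagram to be the only genuinely laborious part of the proof.
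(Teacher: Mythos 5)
Your proposal is correct and matches the intended argument: the paper itself gives no proof of Proposition~\ref{newskew}, deferring to Proposition 3.6 of \cite{115} (of which this is the actegory generalization), and your direct verification is exactly what that argument amounts to. In particular, your accounting is accurate — \eqref{warpunit4}, \eqref{warpunit3}, \eqref{warpunit1} and \eqref{warpunit2} give four of the skew axioms and the unit parts of opmonoidality essentially by substitution (plus naturality of $\alpha$ and the action axioms \eqref{lsa2}, \eqref{lsa3} for the two left-unit triangles), the pentagon factors as one instance of \eqref{warpassoc} followed by one instance of \eqref{lsa1} glued by naturality of $\alpha$ in its $\CC$-variables, and the associativity axiom for the opmonoidal structure on $T$ is literally \eqref{warpassoc}.
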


\begin{example}
Skew warpings are more basic than skew monoidal structures in the following sense. 
Just pretend, for the moment, that we do not know what a skew monoidal (or even monoidal)
category is, except that we would like endofunctor categories to be examples. 
For any category $\CA$, the endofunctor category $\CC = [\CA,\CA]$ acts on $\CA$ by evaluation;
as a functor \eqref{lsa}, the action is the identity. 
A left skew warping riding this action could be taken as the definition of a left skew monoidal structure on $\CA$. 
\end{example}

\section{Comonads on skew actegories}

For a left skew monoidal category $\CC$, let $\Cat^\CC$ denote the 2-category whose objects
are left skew $\CC$-actegories as defined in Section~\ref{sw}.
A morphism is a functor $F \dd \CA \to \CB$ equipped with a natural family of morphisms
\begin{eqnarray}
\gamma_{X,A}\dd X\star FA \lra F(X\star A)
\end{eqnarray}
such that \eqref{am1} and \eqref{am2} commute. 
\begin{equation}\label{am1}
\begin{aligned}
  \xymatrix{
(X\ox Y)\star FA \ar[rr]^{\gamma} \ar[d]_{\alpha} && F((X\ox Y)\star A) \ar[d]^{F\alpha} \\
X\star(Y\star FA) \ar[r]_{1\star\gamma} & X\star F(Y\star A) \ar[r]_{\gamma} &F( X\star(Y\star A)) }
\end{aligned}
\end{equation}
\begin{equation}
  \label{am2}
  \begin{aligned}
  \xymatrix{
I\star FA \ar[r]^{\gamma} \ar[dr]_{\lambda} & F(I\star A) \ar[d]^{F\lambda} \\ & FA }
\end{aligned}
\end{equation}
Such a morphism is called {\em strong} when each $\gamma_{X,A}$ is invertible.
A 2-cell $\xi \dd (F,\gamma) \Rightarrow (G,\gamma)$ in $\Cat^\CC$ is a natural transformation $\xi \dd F \Rightarrow G$ such that \eqref{am2cell} commutes.
\begin{equation}
  \label{am2cell}
  \begin{aligned}
  \xymatrix{
X\star FA \ar[r]^{\gamma_{X,A}} \ar[d]_{1\star\xi_A} & F(X\star A) \ar[d]^{\xi_{X\star A}} \\
X\star GA \ar[r]_{\gamma_{X,A}} & G(X\star A) }
\end{aligned}
\end{equation}

As usual with actions, there is another way to view the 2-category $\Cat^\CC$.
Regard $\CC$ as the homcategory of a 1-object skew bicategory $\Sigma\CC$ in the sense of
Section 3 of \cite{121}. A left skew $\CC$-actegory is an oplax functor $\CA \dd \Sigma\CC \to \Cat$.
A morphism $(F,\gamma) \dd \CA \to \CB$ in $\Cat^\CC$ can be identified with a lax natural transformation
between the oplax functors. The 2-cells are the modifications.  

We are interested in comonads $(\CA, G,\gamma, \delta, \epsilon)$ in the 2-category $\Cat^\CC$.
These are objects of the 2-category $\mathrm{Mnd}_*(\Cat^\CC)$ as defined in \cite{3}.   
Alternatively, they are oplax functors $\Sigma\CC \to \mathrm{Mnd}_*(\Cat)$.
For later reference, apart from the conditions for being a comonad on $\CA$ and the conditions
\eqref{am1} and \eqref{am2}, we require commutativity of \eqref{am3}. 
\begin{equation}  \label{am3}
\begin{aligned}
\xymatrix{
X\star GA \ar[rr]^{\gamma} \ar[d]_{1\star \delta} & & G(X\star A) \ar[d]^{\delta} \\
X\star G^2A \ar[r]_{\gamma} & G(X\star GA) \ar[r]_{G\gamma} & G^2(X\star A) } \quad
\xymatrix{
X\star GA \ar[r]^{\gamma} \ar[dr]_{1\star\epsilon} & G(X\star A) \ar[d]^{\epsilon} \\
& X\star A } 
\end{aligned} 
\end{equation}

The Eilenberg-Moore coalgebra construction $(\CA,G, \delta, \epsilon)\mapsto \CA^G$ is the 2-functor 
right adjoint to the 2-functor $\Cat \to \mathrm{Mnd}_*(\Cat)$ taking each category to that 
category equipped with its identity comonad. 

\begin{proposition}\label{coalgskewaction}
For each comonad $(\CA, G,\gamma, \delta, \epsilon)$ in the 2-category $\Cat^\CC$, the Eilenberg-Moore
coalgebra category $\CA^G$ becomes a left skew $\CC$-actegory with skew action 
\begin{eqnarray*}
X\star (A \stackrel{a}\lra GA) = (X\star A, X\star A \stackrel{X\star a \ }\lra X\star GA \stackrel{\gamma_{X,A} \ }\lra G(X\star A)) \ .  
\end{eqnarray*}
This provides the Eilenberg-Moore construction in the 2-category $\Cat^\CC$ (in the sense of \cite{3}). 
\end{proposition}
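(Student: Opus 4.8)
The plan is to argue in two stages: first that $\CA^G$ really does carry the asserted skew $\CC$-action, and then that this actegory, together with the forgetful functor, has the universal property that characterizes the Eilenberg-Moore object in $\Cat^\CC$. For the first stage, the starting point is to check that, for a $G$-coalgebra $(A,a\dd A\to GA)$, the composite $b=\gamma_{X,A}\circ(X\star a)\dd X\star A\to G(X\star A)$ is once more a $G$-coalgebra structure on $X\star A$. The counit law $\epsilon_{X\star A}\circ b=1_{X\star A}$ I would get from the right-hand (triangle) diagram of \eqref{am3} followed by the counit law for $(A,a)$; the coassociativity law I would get from the left-hand diagram of \eqref{am3}, the coassociativity of $(A,a)$, and the naturality of $\gamma_{X,-}$ applied to the morphism $a$. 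This makes $X\star(A,a)$ an object of $\CA^G$, and functoriality in both variables is immediate, so the action is well defined on objects and morphisms.

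Next I would lift the constraints. The claim is that $\alpha_{X,Y,A}$ and $\lambda_A$ are \emph{themselves} morphisms of $G$-coalgebras for the structures just defined, and hence serve directly as the constraints $\alpha$ and $\lambda$ on $\CA^G$. For $\alpha$, the required equation $G\alpha_{X,Y,A}\circ\gamma_{X\ox Y,A}\circ((X\ox Y)\star a)=\gamma_{X,Y\star A}\circ(X\star\gamma_{Y,A})\circ(X\star(Y\star a))\circ\alpha_{X,Y,A}$ follows from \eqref{am1} together with the naturality of $\alpha_{X,Y,-}$ applied to $a$; for $\lambda$, the coalgebra-morphism condition follows from \eqref{am2} and the naturality of $\lambda$. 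With the constraints in hand, the three actegory axioms \eqref{lsa1}, \eqref{lsa2}, \eqref{lsa3} on $\CA^G$ cost nothing further: the forgetful functor $\CA^G\to\CA$ is faithful and sends each of these diagrams to the corresponding diagram in $\CA$, which commutes by hypothesis. This finishes the first stage.

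For the universal property, the key is to identify the forgetful functor $U\dd\CA^G\to\CA$ as a \emph{strict} morphism in $\Cat^\CC$ (its action comparison is the identity) and the tautological family $\theta_{(A,a)}=a$ as a $2$-cell $\theta\dd U\Rightarrow GU$ in $\Cat^\CC$; indeed the compatibility \eqref{am2cell} demanded of $\theta$ is, by the computation in the first stage, exactly the assertion that the coalgebra structure on $X\star(A,a)$ is $\gamma_{X,A}\circ(X\star a)$. Given any $\CB\in\Cat^\CC$, a morphism $(\CB,1_\CB)\to(\CA,G)$ in $\mathrm{Mnd}_*(\Cat^\CC)$ consists of a $\CC$-action morphism $H\dd\CB\to\CA$ together with a coalgebra $2$-cell $h\dd H\Rightarrow GH$ in $\Cat^\CC$. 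The ordinary (non-equivariant) Eilenberg-Moore property of $\CA^G$ produces a unique functor $\bar H\dd\CB\to\CA^G$ with $U\bar H=H$ and $\theta\bar H=h$, and I would then observe that axiom \eqref{am2cell} for the $2$-cell $h$ is \emph{precisely} the statement that each comparison $\gamma^H_{X,B}$ is a morphism of $G$-coalgebras, which is what equips $\bar H$ with the structure of a $\CC$-action morphism. The same observation one dimension up matches $2$-cells on both sides, so the correspondence is an isomorphism of categories, $2$-natural in $\CB$, exhibiting $\CA^G$ as the Eilenberg-Moore object.

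The step deserving most care, and the main obstacle, is this last identification: recognizing \eqref{am2cell} for the coalgebra $2$-cell $h$ as the coalgebra-morphism equation for the action comparison $\gamma^H$, and checking that the resulting passage between coalgebra $2$-cells and $\CC$-action comparisons is a bijection respecting all the coherence in both dimensions. Once that recognition is made, everything reduces to the plain Eilenberg-Moore universal property together with faithfulness of the forgetful functor. Conceptually the whole argument can be packaged by recalling that a comonad in $\Cat^\CC$ is an oplax functor $\Sigma\CC\to\mathrm{Mnd}_*(\Cat)$, and that postcomposing it with the Eilenberg-Moore $2$-functor $\mathrm{Mnd}_*(\Cat)\to\Cat$ again yields an oplax functor $\Sigma\CC\to\Cat$, namely the skew $\CC$-actegory $\CA^G$; the explicit computations above merely identify the structure maps that this postcomposition produces and confirm that the universal property is inherited.
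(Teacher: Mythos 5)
Your argument is correct. The paper's own proof is exactly the one-sentence conceptual argument you relegate to your closing remark: a comonad in $\Cat^\CC$ \emph{is} an oplax functor $\Sigma\CC\to\mathrm{Mnd}_*(\Cat)$, and postcomposing with the Eilenberg--Moore 2-functor $\mathrm{Mnd}_*(\Cat)\to\Cat$ yields the oplax functor $\Sigma\CC\to\Cat$ that is the actegory $\CA^G$, with the universal property inherited formally. What you do differently is unpack this into an explicit verification: that $\gamma_{X,A}\circ(X\star a)$ is a coalgebra structure (via \eqref{am3}, coassociativity of $a$, and naturality of $\gamma$), that $\alpha$ and $\lambda$ are coalgebra morphisms (via \eqref{am1}, \eqref{am2} and naturality), that the axioms transfer along the faithful forgetful functor, and that \eqref{am2cell} for the coalgebra 2-cell is precisely the coalgebra-morphism condition on the action comparison of the induced functor. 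All of these checks are sound. The paper's route buys brevity and delivers the universal property for free from the formal theory of (co)monads; your route makes visible exactly which hypotheses (in particular the two diagrams of \eqref{am3}) are used where, which is genuinely useful preparation for the computations in Proposition~\ref{liftwarping}. Either is acceptable; yours is a legitimate elaboration of the same underlying idea rather than a different theorem.
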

\begin{proof}
Compose the oplax functor $\Sigma\CC \to \mathrm{Mnd}_*(\Cat)$ corresponding to $(\CA, G,\gamma, \delta, \epsilon)$ with the Eilenberg-Moore 2-functor $\mathrm{Mnd}_*(\Cat) \to \Cat$.  
\end{proof}

Let $\mathrm{U}\dd \CA^G \to \CA$ denote the underlying functor $(A,a)\mapsto A$.

\begin{proposition}\label{liftwarping}
Suppose $(T,K,v,v_0,k)$ is a skew left warping riding the $\CC$-actegory $\CA$.
Suppose $(\CA, G,\gamma , \delta, \epsilon)$ is a comonad in the 2-category $\Cat^\CC$
for which all morphisms of the form $\gamma_{TA,K}$ and $\gamma_{TA,TB\star K}$ are invertible.
Then $(T\mathrm{U},(GK,\delta_K),v,v'_0,k')$ is a skew left warping riding the $\CC$-actegory $\CA^G$ of 
Proposition~\ref{coalgskewaction}, where $v'_0=v_0\circ T\epsilon_K$ and 
$k'_{(A,a)} = \gamma^{-1}_{TA,K}\circ Gk_A\circ a$. 
\end{proposition}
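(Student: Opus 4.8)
The plan is to verify the five warping axioms \eqref{warpassoc}--\eqref{warpunit4} for the proposed data on $\CA^G$, exploiting two structural facts. First, the underlying functor $\mathrm{U}\dd\CA^G\to\CA$ is faithful and strictly preserves the action, so that $\mathrm{U}(X\star(A,a))=X\star A$ and $T\mathrm{U}(A,a)=TA$; hence an axiom whose objects and arrows already lie in $\CC$, or one that can be tested in $\CA$ after applying $\mathrm{U}$, is directly amenable to the original warping data. Second, and crucially, the two invertibility hypotheses say precisely that certain action-objects over $\CA^G$ are cofree: by the left-hand square of \eqref{am3}, $\gamma_{TA,K}$ is a coalgebra morphism from $TA\star(GK,\delta_K)$ to the cofree coalgebra $(G(TA\star K),\delta_{TA\star K})$, so its invertibility identifies $TA\star(GK,\delta_K)$ with the cofree coalgebra on $TA\star K$, and invertibility of $\gamma_{TA,TB\star K}$ identifies $TA\star(TB\star(GK,\delta_K))$ with the cofree coalgebra on $TA\star(TB\star K)$.

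I would first pin down the unit datum. Using the right-hand square of \eqref{am3} (which gives $\epsilon_{TA\star K}\gamma_{TA,K}=TA\star\epsilon_K$), naturality of $\epsilon$, and the coalgebra counit law $\epsilon_A a=1_A$, one computes the section identity
$$(TA\star\epsilon_K)\circ k'_{(A,a)}=k_A ,$$
which exhibits $k'_{(A,a)}$ as the transpose of $k_A$ across the cofree adjunction. This simultaneously shows that $k'_{(A,a)}$ is a coalgebra morphism and that $k'$ is natural, and it is the identity on which everything below turns. The object $(GK,\delta_K)$ is cofree by construction, and $v'_0=v_0\circ T\epsilon_K$ and $v$ need no further comment as data.

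Next I would dispatch the three axioms that live in $\CC$. Axiom \eqref{warpassoc} is literally the original diagram, since each of its objects and arrows is obtained by applying $T\mathrm{U}$, $v$ and the constraints of $\CC$ to the underlying objects, and these are unchanged. For \eqref{warpunit1} I would use naturality of $v$ in its first variable together with $v'_0=v_0\circ T\epsilon_K$ to factor out $T(T\epsilon_K\star 1)$ and reduce to the original \eqref{warpunit1}. For \eqref{warpunit2} I would slide $T\epsilon_K$ across $v$ by naturality in the second variable and then invoke the section identity to collapse $(TA\star\epsilon_K)\circ k'_{(A,a)}$ to $k_A$, landing on the original \eqref{warpunit2}.

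Finally, \eqref{warpunit3} and \eqref{warpunit4} are equalities of coalgebra morphisms into the two cofree coalgebras identified above, so by the universal property it suffices to compare components, i.e.\ to post-compose with the effective counits $TA\star(TB\star\epsilon_K)$ and $\epsilon_K$ (read off from \eqref{am3}). On the right-hand sides the section identity immediately yields $1_{TA}\star k_B$ and $\epsilon_K$. On the left-hand sides I would move the counit inward using naturality of the action constraints $\alpha,\lambda$ and the bifunctoriality of $\star$, then apply the section identity---in \eqref{warpunit3} to the composite coalgebra $TA\star(B,b)$, turning $(T(TA\star B)\star\epsilon_K)\circ k'_{TA\star(B,b)}$ into $k_{TA\star B}$---reducing each to the original \eqref{warpunit3}, \eqref{warpunit4}. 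The main obstacle is \eqref{warpunit3}: this is the only place the second hypothesis (invertibility of $\gamma_{TA,TB\star K}$, making the codomain cofree) is needed, on top of the first (which both defines the domain transpose $k'_{TA\star(B,b)}$ and supplies the cofree identifications everywhere else), and the delicate bookkeeping is commuting $TA\star(TB\star\epsilon_K)$ past $\alpha_{TA,TB,GK}$ and $v_{A,B}\star 1_{GK}$.
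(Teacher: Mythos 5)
Your proposal is correct, and for part of the argument it takes a genuinely different route from the paper's. The treatment of \eqref{warpassoc} and \eqref{warpunit1} coincides with the paper's (the first lives entirely in $\CC$; the second follows from naturality of $v$ and the original axiom), and your handling of \eqref{warpunit2} via the section identity $(TA\star\epsilon_K)\circ k'_{(A,a)}=k_A$ is in substance the same computation the paper carries out diagrammatically (the paper's diagram contains $T$ applied to exactly that identity). Where you genuinely diverge is in the verification that $k'$ is a coalgebra morphism and in axioms \eqref{warpunit3} and \eqref{warpunit4}: the paper proves these by explicit chases involving $\gamma$, $\gamma^{-1}$, \eqref{am1} and \eqref{am3}, whereas you observe that the left square of \eqref{am3} makes $\gamma_{TA,K}$ (resp.\ $\gamma_{TA,TB\star K}$ composed with $TA\star\gamma_{TB,K}$) a coalgebra isomorphism onto a cofree coalgebra, so that $k'_{(A,a)}$ is the transpose of $k_A$ and equalities of coalgebra morphisms into these objects can be tested after postcomposing with the effective counits $TA\star\epsilon_K$ and $TA\star(TB\star\epsilon_K)$ read off from the right square of \eqref{am3}. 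This reduces \eqref{warpunit3} and \eqref{warpunit4} to the section identity, naturality of $\alpha$, $\lambda$, $k$, and the original axioms; I checked the postcomposed computations and they close up as you claim. Your route buys conceptual economy (one universal-property observation replaces three diagram chases, and it makes transparent exactly why the components $\gamma_{TA,K}$ and $\gamma_{TA,TB\star K}$ are the ones that must be invertible), at the cost of needing to know in advance that all arrows being compared --- in particular $\alpha$, $\lambda$ and $f\star 1$ --- are coalgebra morphisms, which is supplied by Proposition~\ref{coalgskewaction}. The paper's chases are more self-contained and elementary but less structured.
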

\begin{proof}
First we need to see that $k'_{(A,a)}\dd (A,a) \to (TA\star GK, \gamma_{TA,GK}\circ (1\star \delta_K))$
is a $G$-coalgebra morphism. This uses the first diagram of \eqref{am3}, naturality of $\delta$
with respect to $k_A$, and the coassociativity of the coaction $a\dd A\to GA$.  

It remains to verify the five axioms \eqref{warpassoc}, \eqref{warpunit1}, \eqref{warpunit2}, \eqref{warpunit3}, \eqref{warpunit4}.
Since only $v$ is involved in \eqref{warpassoc}, it follows from axiom \eqref{warpassoc} for the original skew warping. For \eqref{warpunit1}, we have the diagram
$$
\xymatrix{
T(TGK\star B) \ar[rr]^-{T(T\epsilon_K\star 1)} \ar[d]_-{v_{GK,B}} && T(TK\star B) \ar[rr]^-{T(v_0\star 1)} \ar[d]^-{v_{K,B}} && T(I\ox B) \ar[d]^-{T\lambda_B} \\
TGK\otimes TB \ar[rr]_-{T\epsilon_K\otimes 1} && TK\ox TB \ar[r]_-{v_0\ox 1} & I\otimes TB\ar[r]_-{\lambda_{TB}}& TB}
$$
which uses naturality of $v$ and axiom \eqref{warpunit1} for the original skew warping.
The next diagram proves \eqref{warpunit2}.
$$
\xymatrix{
TA\ar[r]^-{Ta} \ar[dd]_-{1} & TGA \ar[ldd]^-{T\epsilon_A} \ar[r]^-{TGk_A}& TG(TA\star K)\ar[r]^-{T\gamma^{-1}}
\ar[d]_-{T\epsilon_{TA\star K}} & T(TA\star GK)\ar[d]^-{v_{A,GK}}\ar[ld]^-{T(1\star \epsilon_K)} \\
& & T(TA\star K)\ar[rd]^-{v_{A,K}} & TA\ox TGK\ar[d]^-{1\ox T\epsilon_K} \\
TA \ar[rr]_-{\rho_{TA}}\ar[rru]^-{Tk_A} & & TA\ox I & TA\ox TK \ar[l]^-{1\ox v_0} 
}$$
Precomposing the next diagram with $1\star b \dd TA\star B \to TA\star GB$ proves \eqref{warpunit3}. 
Take note here of which components of $\gamma$ are required to be invertible.
$$
\xymatrix{
TA\star GB \ar[d]_-{1\star Gk_B}\ar[r]^-{\gamma} & G(TA\star B)\ar[r]^-{Gk_{TA\star B}}\ar@/_/@{->}[lddd]^-{G(1\star k_B)} & G(T(TA\star B)\star K)\ar[d]^-{\gamma^{-1}}\ar[ld]_-{1} \\
TA\star G(TB\star K)\ar[dd]_-{\gamma} & \ G(T(TA\star B)\star K)\ar[d]^-{G(v_{TA,B}\star 1)} & T(TA\star B)\star GK\ar[d]^-{v_{A,B}\star 1}\ar[l]_-{\gamma} \\
 & G((TA\ox TB)\star K)\ar[ld]_-{G\alpha} & (TA\ox TB)\star GK \ar[d]^-{\alpha}\ar[l]_-{\gamma} \\
G(TA\star (TB\star K))\ar[r]_-{\gamma^{-1}} & TA\star G(TB\star K)\ar[r]_-{1\star \gamma^{-1}} & TA\star (TB\star GK)
}$$
Then
$$
\xymatrix{
GK\ar[r]^-{\delta_K}\ar@/_/@{->}[rd]_-{1} & G^2K\ar[r]^-{Gk_{GK}}\ar[d]^-{G\epsilon_K} & G(TGK\star K)\ar[r]^-{\gamma^{-1}}\ar[d]^-{G(T\epsilon_K\star 1)} & TGK\star GK \ar[d]^-{T\epsilon_K\star 1} \\
& GK\ar[r]^-{Gk_K}\ar@/_/@{->}[rdd]_-{1} & G(TK\star K)\ar[r]^-{\gamma^{-1}}\ar[d]^-{G(v_0\star 1)} & TK \star GK\ar[d]^-{v_0\star 1} \\
& & G(I\star K)\ar[d]^-{G\lambda_K} & I\star GK\ar[d]^-{\lambda_{GK}}\ar[l]^-{\gamma} \\
& & GK\ar[r]_-{1} & GK
}$$
yields \eqref{warpunit4}, which completes the proof.
\end{proof}

\begin{corollary}\label{liftwarpcor1}
Under the hypotheses of Proposition~\ref{liftwarping}, the functor $\mathrm{U}\dd \CA^G\to \CA$
preserves the tensor products obtained from the skew warpings via Proposition~\ref{newskew}
and becomes opmonoidal when equipped with the unit constraint $\epsilon_I \dd GI\to I$.  
\end{corollary}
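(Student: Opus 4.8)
The plan is to write down the opmonoidal data on $\mathrm{U}$ explicitly and then dispatch the three opmonoidal axioms, two of which reduce instantly to data already in place. First I would unwind both tensor products. On $\CA$ the warping $(T,K,v,v_0,k)$ gives $A\bar{\ox}B=TA\star B$ by Proposition~\ref{newskew}, while on $\CA^G$ the lifted warping $(T\mathrm{U},(GK,\delta_K),v,v'_0,k')$ gives $(A,a)\bar{\ox}(B,b)=T\mathrm{U}(A,a)\star(B,b)=TA\star(B,b)$, whose underlying object, by the skew action of Proposition~\ref{coalgskewaction}, is again $TA\star B$; the same identity holds on morphisms, the tensor of coalgebra maps $f$ and $g$ having underlying morphism $Tf\star g$. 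Thus $\mathrm{U}$ preserves tensor products strictly, so I take the multiplicative constraint $\psi$ to be the identity; for the unit constraint I take $\epsilon_K\dd GK\to K$, the counit of $G$ at the skew unit $K$ (note $\mathrm{U}(GK,\delta_K)=GK$).

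With $\psi=\mathrm{id}$ the associativity axiom reduces to the equality of the associator of $\CA$ with the underlying morphism of the associator of $\CA^G$; by Proposition~\ref{newskew} both are given by the same formula $\alpha_{TA,TB,C}\circ(v_{A,B}\star 1_C)$, so it holds. For the left-unit axiom the relevant composite, built from the left unit $\lambda_B\circ(v_0\star 1_B)$ of $\CA$ and the constraint $\epsilon_K\bar{\ox}1_B$ with underlying morphism $T\epsilon_K\star 1_B$, is $\lambda_B\circ(v_0\star 1_B)\circ(T\epsilon_K\star 1_B)=\lambda_B\circ\big((v_0\circ T\epsilon_K)\star 1_B\big)$; since $v'_0=v_0\circ T\epsilon_K$ by Proposition~\ref{liftwarping}, this is exactly the underlying left unit $\lambda_B\circ(v'_0\star 1_B)$ of $\CA^G$, as required.

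The one axiom with real content, and the main obstacle, is the right-unit axiom, which amounts to
$$(1_{TA}\star\epsilon_K)\circ\gamma^{-1}_{TA,K}\circ Gk_A\circ a=k_A.$$
Here I would first rewrite $1_{TA}\star\epsilon_K=\epsilon_{TA\star K}\circ\gamma_{TA,K}$ using the second (counit) triangle of \eqref{am3}; the factor $\gamma_{TA,K}\circ\gamma^{-1}_{TA,K}$ then cancels, which is precisely where the invertibility of $\gamma_{TA,K}$ assumed in Proposition~\ref{liftwarping} is used, leaving $\epsilon_{TA\star K}\circ Gk_A\circ a$. Naturality of $\epsilon$ along $k_A$ rewrites this as $k_A\circ\epsilon_A\circ a$, and the coalgebra counit law $\epsilon_A\circ a=1_A$ collapses it to $k_A$. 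Assembling the three checks shows that $(\mathrm{U},\mathrm{id},\epsilon_K)$ is opmonoidal, which is the assertion of the corollary.
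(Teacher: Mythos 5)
The paper states this corollary without proof, and your verification is correct and is precisely the direct check the authors leave implicit: $\mathrm{U}$ strictly preserves the tensor and the structural morphisms of the lifted action, so with $\psi=\mathrm{id}$ the associativity and left-unit axioms reduce to the definitions $v'=v$ and $v'_0=v_0\circ T\epsilon_K$, while the right-unit axiom follows from the counit triangle of \eqref{am3}, naturality of $\epsilon$, and the coalgebra counit law, exactly as you argue. Note also that your unit constraint $\epsilon_K\dd GK\to K$ is the correct one in the general actegory setting; the paper's $\epsilon_I\dd GI\to I$ only literally makes sense in the situation of Corollary~\ref{liftwarpcor2}, where the warping is the identity one and $K=I$.
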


\begin{corollary}\label{liftwarpcor2}
Since $\CC$ is an object of $\Cat^\CC$ with its own tensor product as skew action,
and since it supports the identity skew warping, 
for any comonad $(\CC, G,\gamma, \delta, \epsilon)$ in the 2-category $\Cat^\CC$,
Corollary~\ref{liftwarpcor1} applies to give a skew monoidal structure on $\CC^G$
with $\mathrm{U}\dd \CC^G\to \CC$ opmonoidal. 
\end{corollary}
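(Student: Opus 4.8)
The plan is to recognise Corollary~\ref{liftwarpcor2} as nothing more than the instantiation of Corollary~\ref{liftwarpcor1} in the case where $\CA$ is $\CC$ itself, viewed as a $\CC$-actegory via the self-action $X\star A = X\ox A$, and where the riding warping is the identity one. So the first task is to write that warping down explicitly: take $T = 1_\CC$, $K = I$, $v_{A,B} = 1_{A\ox B}\dd T(TA\star B)=A\ox B\to TA\ox TB = A\ox B$, $v_0 = 1_I$, and $k_A = \rho_A\dd A\to A\ox I = TA\star K$. I would first confirm that this is a skew left warping by checking \eqref{warpassoc}--\eqref{warpunit4}; since every $v_{A,B}$ and $v_0$ is an identity and $T$ is the identity, those five diagrams collapse to precisely the five skew-monoidal axioms of $\CC$ (for instance \eqref{warpunit4} becomes $\lambda_I\rho_I = 1_I$). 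By Proposition~\ref{newskew} the skew structure this warping determines is the original one on $\CC$, which justifies calling it the identity warping.

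With this set-up fixed I would feed an arbitrary comonad $(\CC, G,\gamma,\delta,\epsilon)$ of $\Cat^\CC$ into Proposition~\ref{liftwarping}, whose outputs specialise transparently. Because $T = 1_\CC$, the lifted transport functor $T\mathrm{U}$ is just $\mathrm{U}\dd \CC^G\to\CC$; the new unit is the cofree coalgebra $(GK,\delta_K) = (GI,\delta_I)$; the new counit datum $v_0' = v_0\circ T\epsilon_K$ is $\epsilon_I\dd GI\to I$; and the new right-unit datum is $k'_{(A,a)} = \gamma^{-1}_{A,I}\circ G\rho_A\circ a$, living in the actegory $\CC^G$ of Proposition~\ref{coalgskewaction} with action $X\star(A,a) = (X\ox A,\ \gamma_{X,A}\circ(1\ox a))$. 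Proposition~\ref{liftwarping} then hands me a skew left warping on $\CC^G$, a second pass through Proposition~\ref{newskew} converts it into the asserted skew monoidal structure, and Corollary~\ref{liftwarpcor1} equips $\mathrm{U}$ with its opmonoidal structure and unit constraint $\epsilon_I\dd GI\to I$, which is the final sentence of the statement. Everything after the first two propositions is purely formal relabelling, so no fresh diagram-chasing beyond what those results already package is required.

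The single point that has to be secured before Proposition~\ref{liftwarping} can be invoked is its hypothesis, that all $\gamma_{TA,K}$ and $\gamma_{TA,TB\star K}$ be invertible. For the identity warping these are exactly the families $\gamma_{A,I}\dd A\ox GI\to G(A\ox I)$ and $\gamma_{A,B\ox I}\dd A\ox G(B\ox I)\to G(A\ox(B\ox I))$; indeed the very formula for $k'$ above already forces an inverse for $\gamma_{A,I}$. I expect this to be the real obstacle, and the step deserving the most care. My approach would be to construct the inverses directly from the comonad data, playing the two compatibilities \eqref{am3} against the skew right-unit axiom $\alpha_{A,B,I}\,\rho_{A\ox B} = 1_A\ox\rho_B$ and against naturality of $\gamma$ in its acted-on variable along $\rho$; the identity $\epsilon_{A\ox I}\circ\gamma_{A,I} = 1_A\ox\epsilon_I$ extracted from the second square of \eqref{am3} is the natural candidate for one half of such a splitting, with the comultiplication square of \eqref{am3} controlling the other. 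Once that invertibility is in hand, the remainder of the argument is the mechanical specialisation described above.
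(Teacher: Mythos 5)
Your reduction of Corollary~\ref{liftwarpcor2} to Corollary~\ref{liftwarpcor1} via the identity warping ($T=1_\CC$, $K=I$, $v$ and $v_0$ identities, $k_A=\rho_A$) is exactly the route the paper takes: the five warping axioms \eqref{warpassoc}--\eqref{warpunit4} collapse to the five skew-monoidal axioms of $\CC$, and your specializations $T\mathrm{U}=\mathrm{U}$, unit $(GI,\delta_I)$, $v_0'=\epsilon_I$, $k'_{(A,a)}=\gamma^{-1}_{A,I}\circ G\rho_A\circ a$, with the action of Proposition~\ref{coalgskewaction}, are all as intended. Up to that point there is nothing to add.

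The genuine problem is your final paragraph. The invertibility of $\gamma_{A,I}$ and $\gamma_{A,B\ox I}$ is \emph{not} derivable from the comonad data, so the splitting you hope to extract from \eqref{am3} does not exist. The counit square of \eqref{am3} gives only $\epsilon_{A\ox I}\circ\gamma_{A,I}=1_A\ox\epsilon_I$, which is not one half of an isomorphism because $\epsilon$ is not invertible, and the comultiplication square supplies no candidate inverse either. A concrete counterexample: take $\CC=(\Set,\times,1)$ acting on itself and the stream comonad $GX=X^{\mathbb{N}}$ (counit evaluation at $0$, comultiplication $\delta(s)(m)(n)=s(m+n)$) with $\gamma_{X,A}(x,s)=(n\mapsto (x,s(n)))$; the axioms \eqref{am1}, \eqref{am2}, \eqref{am3} all hold, yet $\gamma_{X,1}\dd X\times 1^{\mathbb{N}}\to (X\times 1)^{\mathbb{N}}$ is the inclusion of the constant streams and fails to be invertible as soon as $X$ has more than one element. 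The paper does not attempt what you attempt: Corollary~\ref{liftwarpcor2} asserts that Corollary~\ref{liftwarpcor1} ``applies'', and Corollary~\ref{liftwarpcor1} is stated ``under the hypotheses of Proposition~\ref{liftwarping}'', so the invertibility of the components $\gamma_{A,I}$ and $\gamma_{A,B\ox I}$ is carried along as a standing hypothesis on the comonad rather than proved (note the paper's own aside in the proof of Proposition~\ref{liftwarping}: ``Take note here of which components of $\gamma$ are required to be invertible''; in the worked example of the final section $\gamma$ is invertible in all components, so the hypothesis is verified there directly). You should therefore delete the attempted derivation and instead record these two invertibility conditions as inherited hypotheses; with that amendment your argument is complete and coincides with the paper's.
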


\begin{remark} If the comonad of Corollary~\ref{liftwarpcor2} is idempotent and $(G,\gamma)$
is strong in $\Cat^\CC$ then $U\dd \CC^G\to \CC$ is a coreflection and the dual of Theorem~\ref{skmonrefthm} applies. The same skew monoidal structure on $\CC^G$ is obtained as in 
Corollary~\ref{liftwarpcor2}.
The point is that the diagram \eqref{link} commutes by $G$ applied to the right-hand diagram
of \eqref{am3} and a counit property of the comonad.
So Theorem~\ref{skmonrefthm} appears to be a stronger result than 
Corollary~\ref{liftwarpcor2} in the idempotent comonad case.  
\begin{equation}\label{link}
\begin{aligned}
\xymatrix{
G(X\otimes GY) \ar[rr]^-{G(1\otimes \varepsilon_Y)} \ar[d]_-{G\gamma_{X,GY}} && G(X\otimes Y) \ar[d]^-{\delta_{X\otimes Y}} \\
GG(X\otimes Y) \ar[rr]_-{1} \ar[rru]_-{G\varepsilon_{X\otimes Y}} && GG(X\otimes Y)}
\end{aligned}
\end{equation}        
\end{remark}
   
\section{The example of Section~\ref{ex} without injectivity}

Let $\CC$ be a category with object set $O$ and morphism set $E$, and let $\xi\colon U\to O$ be a function (not necessarily injective).
Composition with $\xi$ induces a comonadic functor $N = \xi_!\colon\Set/U\to\Set/O$; write $R = \xi^*$ for the right adjoint, given by pullback. The comonad $G= NR = \xi_!\xi^*$ is given by $-\x_O U$.

The category structure on $\CC$ induces a skew monoidal structure on $\Set/O$, with tensor product $X\ox Y$ given by:
$$(X\ox Y)_j = \sum_i X_i \x \CC(i,j)\x Y_j$$
and so $X\ox-$ is given by $X\x_O E\x_O -$. 
The unit $I$ is the terminal object $1\colon O\to O$. 

From the formulas for $G$ and $X\ox-$ involving products in $\Set/O$, it is clear that we have natural isomorphisms $\gamma_{X,Y}\colon X\ox GY\cong G(X\ox Y)$, compatible with the comonad structure, in the sense that the diagrams \eqref{am3} commute.
Almost as easy is compatibility with the associativity map and left unit constraint in the sense of diagrams \eqref{am1} and \eqref{am2}.

So we have a category $\CC$ with object-set $O$, giving rise to the skew monoidal category $\Set/O$, and the comonad $G=\xi_!\xi^*$ on $\Set/O$ as required by Corollary~\ref{liftwarpcor2}. This gives rise to a skew monoidal structure on $\Set/U$, with unit $\xi^*I$; in other words with unit $I'$ equal to the terminal object $1\colon U\to U$. It is clear from the construction that this tensor product preserves colimits in each variable. So from the general theory, it must correspond to some category $\CA$ with object-set $U$. 

Since $\xi^*\colon\Set/U\to\Set/O$ is opmonoidal, $\xi$ is the object part of a functor $F\colon\CA\to\CC$. Since $\xi^*$ preserves the tensor, the functor $F$ is fully faithful. y

Thus $\CA$ must in fact be obtained from $\xi \colon U\to\CC$ via the factorization into
a bijective-on-objects functor followed by a fully faithful functor.


\begin{thebibliography}{00}

\bibitem{JimA} Jim Andrianopoulos, Remarks on units of skew monoidal categories, arXiv:1505.02048v1.\label{JimA}

\bibitem{DayPhD} Brian J. Day, ``Construction of Biclosed Categories'' (PhD Thesis, University of New South Wales, 1970) \url{http://www.math.mq.edu.au/~street/DayPhD.pdf}.\label{DayPhD}

\bibitem{DayConv} Brian J. Day, On closed categories of functors, \textit{Lecture Notes in Mathematics} \textbf{137} (Springer-Verlag, 1970)1--38.\label{DayConv}

\bibitem{DayRT} Brian J. Day, A reflection theorem for closed categories, \textit{J. Pure Appl. Algebra} \textbf{2} (1972) 1--11.\label{DayRT}

\bibitem{115} Stephen Lack and Ross Street, Skew monoidales, skew warpings and quantum categories, \textit{Theory and Applications of Categories} \textbf{26} (2012) 385--402.\label{115}

\bibitem{121} Stephen Lack and Ross Street, On monads and warpings, \textit{Cahiers de topologie et g\'eom\'etrie diff\'erentielle cat\'egoriques} \textbf{60} (2014) 244--266.\label{121}

\bibitem{3} Ross Street, The formal theory of monads, \textit{J. Pure Appl. Algebra} \textbf{2} (1972) 149--168.\label{3}

\bibitem{116} Ross Street, Skew-closed categories, \textit{Journal of Pure and Applied Algebra} \textbf{217} (June 2013) 973--988.\label{116}

\bibitem{Szl2012} Kornel Szlach\'anyi, Skew-monoidal categories and bialgebroids, \textit{Advances in Math.} \textbf{231} (2012) 1694--1730.\label{Szl2012}

\end{thebibliography}
\end{document}